\def\doi#1{   {\href{http://dx.doi.org/#1}
   {{\mdseries\ttfamily DOI}}}}
\newcommand{\de}{\delta}    
  \newcommand{\ep}{\varepsilon}
\newcommand{\R}{\mathbb{R}}\newcommand{\Z}{\mathbb{Z}}
\newcommand{\N}{\mathbb{N}}
\newcommand{\pt}{\partial_t}\newcommand{\pa}{\partial}
\newcommand{\les}{{\lesssim}}
\newcommand{\beeq}{\begin{equation}}\newcommand{\eneq}{\end{equation}}
\newcommand{\Sp}{{\mathbb S}}
\newcommand{\supp}{\text{supp}}
\newcommand{\cd}{{\,\cdot\,}}
\newenvironment{prf}{\noindent {\bf Proof.} }{\endprf\par}
\def \endprf{\hfill  {\vrule height6pt width6pt depth0pt}\medskip}
\numberwithin{equation}{section}
\def\O{{\mathcal{O}}}
\def\<{\langle}             \def\>{\rangle}
\def\({\left(}                 \def\){\right)}
\newcommand{\la}{\langle}
\newcommand{\ra}{\rangle}
\newtheorem{thm}{Theorem}[section]
\newtheorem{coro}[thm]{Corollary}
\newtheorem{lem}[thm]{Lemma}
\theoremstyle{remark}
\theoremstyle{definition}
\title
{The Strauss conjecture on asymptotically flat space-times}
\author{Jason Metcalfe}
\address{Department of Mathematics, University of North Carolina,
  Chapel Hill, NC  27599-3250, USA}
\email{metcalfe@email.unc.edu}
\urladdr{http://metcalfe.web.unc.edu}
\author{Chengbo Wang}
\address{School of Mathematical Sciences\\                Zhejiang University\\                Hangzhou 310027, China}
\email{wangcbo@gmail.com}
\urladdr{http://www.math.zju.edu.cn/wang}
\date{}
\thanks{
The first author was supported in part by NSF grant DMS-1054289.
The second author was supported in part by NSFC 11301478, National Support Program for Young Top-Notch Talents.
}
\dedicatory{} \commby{}
\begin{document}

\begin{abstract}
By assuming a certain 
localized energy estimate, we prove the existence portion of the
Strauss conjecture on asymptotically flat manifolds, possibly exterior
to a compact domain, when the spatial dimension is
$3$ or $4$. In particular, this result applies to the $3$ and $4$-dimensional 
Schwarzschild and Kerr (with small angular momentum) black hole backgrounds, 
long range asymptotically Euclidean spaces, and small time-dependent
asymptotically flat perturbations of Minkowski space-time.  We also
permit lower order perturbations of the wave operator.
The key estimates are a class of weighted
Strichartz estimates, which are used near infinity where the metrics
can be viewed as small perturbations of the Minkowski metric, and the assumed
localized energy estimate, which is used in the remaining compact set.
\end{abstract}

\keywords{
Strauss conjecture, Schwarzschild space-time, Kerr space-time, asymptotically flat space-time, weighted Strichartz estimates, localized energy estimates}

\subjclass[2010]{35L70, 35L15}

\maketitle 

\section{Introduction}
\label{sec-Intro}

The purpose of this article is to establish global existence for
semilinear wave equations with small initial data on a wide class of
space-times.  Given a space-time $(M,g)$, we shall examine
\begin{equation}\label{main.eq.0}
\begin{cases}
\Box_g u := \nabla^\alpha \partial_\alpha u = F_p(u),\quad (t,x)\in
M,\\
u(0,x)=u_0(x),\quad \partial_t u(0,x)=u_1(x),
\end{cases}
\end{equation}
for sufficiently nice initial data $u_0, u_1$.  Here
\begin{equation}
  \label{Fp}
  \sum_{0\le j\le 2} |u|^j |\partial_u^j F_p(u)|\lesssim |u|^p\quad
  \text{for } |u|\ll 1,
\end{equation}
and typical examples include $F_p(u)=\pm |u|^p$ and $F_p(u)=\pm |u|^{p-1}u$.
In the process, we
shall also weaken some hypotheses that were made on the data in \cite{LMSTW}.
We shall consider
asymptotically flat space-times that permit a localized energy
estimate.  This suffices in a large compact set where the influence of
the geometry is most significant.  The flatness then guarantees that we
are sufficiently close to the Minkowski space-time outside this
compact set to derive analogs of the estimates that have been
previously used, namely weighted Strichartz estimates.  This strategy has become quite common and is
found in, e.g., a large number of the references.  See, amongst
others, \cite{Burq}, \cite{DMSZ}, \cite{HMSSZ}, \cite{LMSTW},
\cite{MMTT}, \cite{MetTa07, MetTa09}, \cite{MTT}, \cite{Ta08, Ta13}.

Rather than considering \eqref{main.eq.0}, we shall instead examine
\begin{equation}\label{main.eq}
\begin{cases}
P u = F_p(u),\quad (t,x)\in
M\\
u(0,x)=u_0(x),\quad \partial_t u(0,x)=u_1(x),
\end{cases}
\end{equation}
where
\[Pu = \partial_\alpha g^{\alpha\beta}\partial_\beta u +
b^\alpha \partial_\alpha u + c u.\]
Here we shall be permitted to work with the volume form $dV = dx\, dt$.
A conjugation by $g^{-1/4}$ reduces \eqref{main.eq.0} to this case.
In the above, the common convention $t=x^0$ is employed.  The Einstein summation
convention is used, as well as the convention that Greek
indices $\alpha, \beta, \gamma$ range from $0$ to $n$ while Latin
indices $i, j, k$ will run from $1$ to $n$.  
We will use $\mu,\nu$ to
denote multi-indices.

Let us begin with the ambient space-time manifolds.  We shall set
$M=\R_+\times \R^n$ or $M=\R_+\times (\R^n\backslash \mathcal{K})$
where $\mathcal{K}\subset \{|x|<R_0\}$ for some $R_0>0$ and has a smooth boundary.  In the
case of a $(1+3)$-dimensional black hole space-time, $\mathcal{K}$
will be a ball of radius $R_0$.  
The manifold $M$ is equipped with a Lorentzian metric $g$
\[g = g_{\alpha\beta}(t,x) dx^\alpha dx^\beta\]
where $g_{\alpha\beta}\in C^3(M)$ has signature $(1,n)$ and inverse
$g^{\alpha\beta}$.  

Many of our estimates will rely on a spatial dyadic summation.   
So before proceeding, we introduce the notation
\[\|u\|_{\ell^s_q A} = \|\phi_j(x)u(t,x)\|_{\ell^s_q A } =
  \Bigl\|\Bigl(2^{js}\|\phi_j(x)u(t,x)\|_A\Bigr)\Bigr\|_{l^q_{j\ge
      0}},\]
for a norm $A$ and a partition of unity subordinate to the inhomogeneous dyadic
(spatial) annuli.  Thus,
\[\sum_{j\ge 0} \phi_j^2(x)=1,\quad \text{supp } \phi_j\subset \{\la
x\ra\approx 2^j\}.\]
Similarly, we use $\dot{\ell}^s_q$ to denote the
  homogeneous version.

{\bf Hypothesis 1: Asymptotic Flatness.}  We shall assume that $(g,
b ,c)$ is asymptotically flat in the following
sense.  We first assume that $g$ can be decomposed as
\begin{equation}\tag{H1}
  \label{H1}
  g = m + g_0(t,r)+g_1(t,x)
\end{equation}
where $m=\text{diag}(-1,1,\dots,1)$ denotes the Minkowski metric, 
$g_0$ is a {\em radial} long range perturbation, and $g_1$
is a short range perturbation.  More specifically, we assume
\begin{equation}\tag{H1.1}
  \label{H1.1}
  \|\partial^\mu_{t,x} g_{i,{\alpha\beta}}\|_{\ell^{i+|\mu|}_1
    L^\infty_{t,x}} = \O(1),\quad i=0,1,\quad |\mu|\le 3.
\end{equation}
  The long range perturbation is
radial in the sense that when writing out the metric $g$ with
$g_1=0$, in polar coordinates $(t,x)=(t,r\omega)$ with $\omega\in
\Sp^{n-1}$, we have
$$m+g_0(t,r)=\tilde g_{00}(t,r)dt^2+2\tilde g_{01}(t,r)dtdr+\tilde g_{11}(t,r)dr^2+\tilde g_{22}(t,r) r^2 d\omega^2.$$ 
In this form, the assumption \eqref{H1.1} is equivalent to the
following requirement
\begin{equation*}
  \|\partial_{t,x}^\mu (\tilde{g}_{00}+1, \tilde{g}_{11}-1,
  \tilde{g}_{22}-1,\tilde{g}_{01})\|_{\ell^{|\mu|}_1L^\infty_{t,x}}=\O(1),\quad
    |\mu|\le 3.
\end{equation*}
Moreover, for the lower order perturbations, we shall assume that
\begin{equation}
  \label{H1.2}\tag{H1.2}
   \|\partial^\mu_{t,x} b \|_{\ell^{1+|\mu|}_1 L^\infty_{t,x}} +
   \|\partial_{t,x}^\mu c\|_{\ell^{2+|\mu|}_1 L^\infty_{t,x}} =
   \O(1),\quad |\mu|\le 2.
\end{equation}

We note that these hypotheses are reminiscent of those that have
appeared previously in \cite{Ta08, Ta13}, \cite{MetTa07},
\cite{MTT}, and \cite{W}.  The radial symmetry on the long range perturbation is
primarily used to assist with commuting rotational vector fields
$\Omega$ with $P$
in the sequel.

{\bf Hypothesis 2: Localized energy.}  We shall assume that the metric
$g$ permits a uniform energy estimate and (weak) localized energy
estimate.  Specifically, we assume that there exists $R_1>R_0$ so that if $u$ is a solution to the
linear wave equation $P u = F$, then
\begin{multline}
  \label{H2}\tag{H2}
 \|\partial \partial^\mu u\|_{L^\infty_t L^2_x} +
  \|(1-\chi)\partial \partial^\mu u\|_{\ell^{-1/2}_\infty L^2_{t,x}} +
  \|\partial^\mu u\|_{\ell^{-3/2}_\infty L^2_{t,x}}
\\\lesssim \|u(0,\cd)\|_{H^{|\mu|+1}} + \|\partial_t
u(0,\cd)\|_{H^{|\mu|}} + \sum_{|\nu|\le |\mu|} \|\partial^\nu
F\|_{L^1_tL^2_x} 
\end{multline}
for all $|\mu|\le 2$.  Here $\chi$ is a smooth function that is
identically $1$ on $B_{R_1/2} :=\{|x|\le R_1/2\}$  and is supported
in $B_{R_1}$.
The next section will discuss this
hypothesis in further detail and will provide some examples where it
is known to hold.

The main result of this paper then states that solutions to \eqref{main.eq}
exist globally for sufficiently small Cauchy data.  See
Theorem~\ref{main_thm} for a more precise statement once further
notations are established.
\begin{thm}\label{metaTheorem0}
Let $n=3, 4$,
and assume \eqref{H1}, \eqref{H1.1}, \eqref{H1.2}, and
\eqref{H2}. 
Consider the problem \eqref{main.eq}
with $p>p_c$ where $p_c>1$ solves
\[(n-1)p_c^2 - (n+1)p_c - 2 = 0.\]
Then there exists a global solution $u$ for any initial data which are
sufficiently small, decaying, and regular.
\end{thm}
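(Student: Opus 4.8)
The plan is to solve \eqref{main.eq} by a contraction mapping in a function space $X$ whose norm sees two regimes at once: a weighted Strichartz norm, of Georgiev--Lindblad--Sogge type, which controls the solution near spatial infinity where $g$ is a perturbation of $m$; and the three quantities appearing on the left of \eqref{H2}, which control it on the compact set where the geometry is nontrivial. Concretely I would take
\[
\|u\|_X \;=\; \| |x|^{\zeta} u\|_{L^q_{t}L^q_{|x|}L^2_\omega} + \|\partial u\|_{L^\infty_t L^2_x} + \|(1-\chi)\partial u\|_{\ell^{-1/2}_\infty L^2_{t,x}} + \|u\|_{\ell^{-3/2}_\infty L^2_{t,x}},
\]
with $q$ and $\zeta$ chosen as functions of $n$ and $p$ exactly as in the flat Strauss theory, so that the nonlinear estimate below is scale invariant at $p=p_c$; when $n=4$ one must add to $\|u\|_X$ the analogous norms of a bounded number of rotational derivatives $\Omega u,\Omega^2 u$, since the even-dimensional weighted estimates require some angular regularity. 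The forcing space $X^\ast$ is the corresponding predual weighted norm together with an $L^1_t L^2_x$ component, the latter being what \eqref{H2} accepts as a source.

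The first, and main, step is the linear estimate $\|u\|_X \les \mathcal D(u_0,u_1) + \|F\|_{X^\ast}$ for solutions of $Pu=F$, where $\mathcal D$ is a suitable weighted Sobolev norm of the data. I would prove it by a partition of physical space. Write $u=\chi u+(1-\chi)u$. For the localized piece, \eqref{H2} (used for all $|\mu|\le 2$, to carry the derivatives needed by the $n=4$ angular argument) directly controls the last three terms of $\|u\|_X$ and also controls $\chi u$ on the compact set $\{|x|\le R_1\}$, hence its contribution to the weighted norm; the price is the commutator $[P,\chi]u$, a first-order expression spatially supported in $\{R_1/2<|x|<R_1\}$, and such spatially compact lower-order terms are exactly what \eqref{H2} absorbs. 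For the exterior piece, with $\Box_m = m^{\alpha\beta}\partial_\alpha\partial_\beta$ the flat wave operator, one has $\Box_m\big((1-\chi)u\big) = (1-\chi)F - (1-\chi)(P-\Box_m)u - [\Box_m,\chi]u$, and I would apply the flat weighted Strichartz estimates together with the energy inequality. The cutoff commutator $[\Box_m,\chi]u$ is again spatially compact and controlled by the localized energy norms; the short-range part $g_1$ of the metric and the lower-order coefficients $b,c$ are genuinely perturbative, because \eqref{H1.1}--\eqref{H1.2} give them decay $\langle x\rangle^{-1-}$, $\langle x\rangle^{-1-}$ and $\langle x\rangle^{-2-}$ respectively, so that $(1-\chi)(P-\Box_m)u$ minus its long-range part lands in $X^\ast$ with a small constant (paired against the $\ell^{-1/2}_\infty L^2_{t,x}$ and $\ell^{-3/2}_\infty L^2_{t,x}$ norms of $\partial u$ and $u$) and is absorbed.

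The genuine obstacle is the radial long-range part $g_0$, which by \eqref{H1.1} need not decay faster than $o(1)$ and so cannot be moved to the right-hand side of a Strichartz estimate. Here I would keep $g_0$ on the left and prove the weighted Strichartz estimate directly for the radial operator $\Box_m + g_0\cdot\partial^2$ near infinity. Because $g_0$ is radial, its only effect on the principal symbol is a radial, $C^3$ and $o(1)$ bending of the characteristic cones, and the Georgiev--Lindblad--Sogge estimate — at bottom a weighted bound concentrated along light cones, obtained by a $TT^\ast$ plus Hardy-type argument — is stable under such a change after an adjustment of the radial variable; the smallness of $g_0$ is then needed only to absorb the commutators $[\Omega,\,\Box_m+g_0\partial^2]$, which is precisely where the radial symmetry of the long-range perturbation is used, so that these commutator terms fall on the short-range/lower-order side and the $n=4$ angular norms propagate. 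Controlling this perturbation of the weighted Strichartz estimate, and the bookkeeping of how the near and far estimates are reconciled through the cutoffs, is the technical heart of the argument; it is designed to parallel, while relaxing the hypotheses on the data of, the scheme of \cite{LMSTW}.

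Granting the linear estimate, the nonlinear step is routine. By the structural bound \eqref{Fp} one has $|F_p(u)|\les |u|^p$, so Hölder's inequality in the weighted spaces gives $\|F_p(u)\|_{X^\ast}\les \|u\|_X^p$, the exponents closing exactly because $p>p_c$ with $p_c$ the positive root of $(n-1)p^2-(n+1)p-2=0$; the same computation with the first-derivative bound in \eqref{Fp} yields $\|F_p(u)-F_p(v)\|_{X^\ast}\les \big(\|u\|_X^{p-1}+\|v\|_X^{p-1}\big)\|u-v\|_X$. Hence, for $\mathcal D(u_0,u_1)$ small, the map carrying $v$ to the solution of $Pu=F_p(v)$ with the prescribed Cauchy data is a contraction on a small ball of $X$, and its unique fixed point is a global solution of \eqref{main.eq}; the precise form of $\mathcal D$ and the smallness threshold are what the more precise Theorem~\ref{main_thm} records. \QED
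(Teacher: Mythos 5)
Your overall architecture (compact region handled by \eqref{H2}, exterior region handled by a weighted Strichartz estimate for the perturbed operator, cutoff commutators supported on a fixed annulus fed back into the localized energy bound, then a contraction argument with exponents closing for $p>p_c$) is the same as the paper's. But there is a genuine gap at exactly the point you identify as the heart of the matter: the weighted Strichartz estimate in the presence of the long-range radial part $g_0$. You propose to keep $g_0$ on the left and argue that the Georgiev--Lindblad--Sogge estimate, ``at bottom a $TT^*$ plus Hardy-type argument,'' is stable under a radial $o(1)$ bending of the cones ``after an adjustment of the radial variable.'' This is an assertion, not an argument, and it is doubtful as stated: $g_0=g_0(t,r)$ is time-dependent, so no change of the radial variable normalizes it, and the flat weighted estimates rest on explicit spherical-harmonics/Fourier analysis of $\Box_m$ that does not obviously survive a long-range perturbation of the principal part. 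Note also that the obstruction is not that $g_0$ cannot be made small --- by the $\ell^0_1$ summability in \eqref{H1.1} its dyadic tail norms are small outside a large ball --- but that $\ell^0_1$ smallness (no extra power of decay) cannot be absorbed against the $\ell^{-1/2}_\infty L^2_{t,x}$ local energy norm, so one needs a linear theory that tolerates small \emph{long-range} perturbations. That theory is precisely the nontrivial input the paper imports: the Metcalfe--Tataru localized energy estimates in $\dot H^\delta$-based spaces, $-1<\delta<0$ (Theorem~\ref{thm-MetTa}, \cite{MetTa07}), from which the weighted Strichartz bound \eqref{weightedStrichartz} is \emph{derived} --- via the weighted $L^2_{t,x}$ bound of Lemma~\ref{thm-MetTa2}, the trace estimate \eqref{eq-trace} giving an $L^\infty_{t,r}H^{s-1/2}_\omega$ endpoint, and interpolation of the $X^\delta$ spaces (Lemma~\ref{lem-Xinterp}), with the forcing handled by the dual trace estimate --- rather than by perturbing the $TT^*$ proof of the flat estimate. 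Without either reproducing something like \cite{MetTa07} or supplying an actual proof of your ``stability under radial adjustment'' claim, your linear estimate is unproven.

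Two smaller inaccuracies. First, the radial symmetry of $g_0$ is not used to absorb commutators $[\Omega,\Box_m+g_0\partial^2]$: rotations commute exactly with the radially symmetric part, which is the point of the hypothesis; the commutators $[P,\partial^\mu\Omega^\nu]$ only produce coefficients coming from the short-range pieces $g_1,b,c$ (small in $\ell^1_1$/$\ell^2_1$ outside $B_R$), and these are absorbed via Lemma~\ref{thm-MetTa2-2}, while the smallness of $g_0$ itself is consumed inside the black-box localized energy theorem, not in a commutator step. Second, the nonlinear step is not quite ``routine Hölder'': since the Strichartz norm only controls $H^\sigma_\omega$ with small $\sigma$, closing the estimate requires commuting with $Z^{\le 2}$ in \emph{both} dimensions $n=3,4$ (not just angular derivatives for $n=4$), Sobolev embedding on the sphere, the weighted Sobolev estimates of Lemma~\ref{thm-Sobo} for the intermediate region, and a Gagliardo--Nirenberg bound such as \eqref{eq-add2} to control the $L^1_tL^2_x$ component of the forcing norm on the compact set; your outline acknowledges the data/compact-region bookkeeping only implicitly. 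These latter points are repairable; the missing proof of the exterior estimate for the long-range perturbation is the substantive gap.
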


The first result of this type regarding such nonlinear wave equations (on Min\-kow\-ski
space) with small powers $p$ was \cite{John79} where global existence
above $1+\sqrt{2}$ and blow-up for powers below the same was
established in $n=3$.  In generic dimensions, blow-up for powers below
$p_c$ was established in \cite{Sideris}, while global existence for small
initial data when $p>p_c$ followed in \cite{GLS97}, \cite{Ta01-2}.

Several recent works have generalized these results to exterior
domains and asymptotically flat backgrounds in dimensions $n\le 4$.
This paper represents a unification and generalization of these
results, most of which will be outlined in the next section.  We refer
the interested reader to, e.g., \cite{WaYu11p} for a complete history.

A key estimate that has permitted such progress is a class of weighted
Strichartz estimates, which was developed independently in
\cite{HMSSZ} and \cite{FaWa}.  This class of estimate has subsequently
been shown to be robust under small, asymptotically flat
perturbations.  See, e.g., \cite{SoWa10}, \cite{WaYu11}, and
\cite{LMSTW}.  The localized energy estimates \eqref{H2} exactly permit
us to pass to the case of such a small perturbation by handling the
regions where the geometry has the most significant role.  This method
stems from \cite{LMSTW}.  

The key difference between the current result and \cite{LMSTW} is the
approach to proving the weighted Strichartz estimates.  In
\cite{LMSTW} a duality argument of \cite{WaYu11} is used to ``divide
through'' by a derivative in the localized energy estimate.  That
process required that the data be compactly supported.  Here we use
the more robust ideas of \cite{MetTa07}.

\subsection{Notations}  Before proceeding, let us set more
notations.  The vector fields to be used will be labeled as
\[Y=(Y_1,\dots, Y_{n(n+1)/2}) = \{\nabla_x,\Omega\},\quad
Z=\{\partial, \Omega\},\]
where $\partial = \partial_{t,x}$.  Here $\Omega$ denotes the
generators of spatial rotations:
\[\Omega_{ij} = x_i\partial_j - x_j\partial_i,\quad 1\le i<j\le n.\]
For a norm $X$ and a nonnegative integer $m$, we shall use the shorthand
\[|Z^{\le m} u| = \sum_{|\mu|\le m} |Z^\mu u|,\quad \|Z^{\le m} u\|_X
= \sum_{|\mu|\le m} \|Z^\mu u\|_X,\]
with the obvious modification for $\|Y^{\le m} u\|_X$, e.g.
Letting $L^q_\omega$ be the standard Lebesgue space on the sphere
$\Sp^{n-1}$, we will use the following convention for mixed norms
$L^{q_1}_tL^{q_2}_rL^{q_3}_\omega$:
\[\|f\|_{L^{q_1}_tL^{q_2}_rL^{q_3}_\omega(M)} = \Bigl\|\Bigl(\int
\|f(t,r\omega)\|^{q_2}_{L^{q_3}_\omega}
r^{n-1}\,dr\Bigr)^{1/q_2}\Bigr\|_{L^{q_1}(\{t\ge 0\})},\]
with trivial modification for the case $q_2=\infty$.  Occasionally,
when the meaning is clear, we shall omit the subscripts.

\section{Discussion of Hypothesis 2}

In this section, we discuss the localized energy estimates, which are
assumed to hold by hypothesis \eqref{H2}.  On $\R_+\times\R^n$, $n\ge 3$ with the
Minkowski metric, such estimates look like
\begin{equation}\label{kss}
\|u'\|_{L^\infty_tL^2_x\cap \ell_\infty^{-1/2} L^2_{t,x}} +
\|u\|_{\ell_\infty^{-3/2} L^2_{t,x}} \lesssim \|u'(0,\cd)\|_{L^2} +
\|\Box u\|_{L^1_tL^2_x + \ell_1^{1/2} L^2_{t,x}}
\end{equation}
and the $\ell_\infty^{-3/2}$ on the second term may be replaced by
$\ell_2^{-3/2}$ summability in dimensions $n\ge 4$.
Estimates of this form date back to \cite{Mo2} and can be proved by
integrating $\Box u$ against a multiplier of the form $f(r)\partial_r
u + \frac{n-1}{2} \frac{f(r)}{r}u$.  The estimate follows after
integration by parts and choosing $f(r) = r/(r+R)$, where $R$ is a
dyadic number.  See, e.g., \cite{MetSo06} for this proof and
\cite{MTT} for a more exhaustive history of such estimates.

When the background geometry permits trapped rays, which roughly
speaking are light rays that do not escape a compact set in finite
time, such estimates are known to be false.  See \cite{Ral} and \cite{Sbierski}.  In
certain situations, however, a related estimate with a loss can be
recovered.  One such possibility is to ask for a localized energy
estimate to hold upon localizing away from the trapping, and this is
what Hypothesis 2 represents.  See, e.g., \cite{MTT} and \cite{Ta13}
for situations where similar hypotheses were employed.

It is worth noting that \eqref{H2} is assuming three sorts of bounds.
The first is a uniform bound on the energy and corresponds to the
first term in the estimate.  The second term corresponds to the
localized energy when cut off away from any possible trapping.  The last term is a
lossless lower order term from the localized energy.  This term
corresponds to the second term in the left side of \eqref{kss}, and we
are assuming that the same bound can be obtained in the presence of
the geometry.  It is also assumed that higher order versions
(corresponding to $|\mu|\neq 0$) of each estimate hold.  It would
suffice to assume that the first and third terms are controlled in a
sufficiently large ball and that the second term (without the cutoff)
can be bounded in the dyadic region with radii comparable to the same
ball.  Using standard cutoff techniques, it is relatively easy to show
that these two assumptions are equivalent, and as such, we
choose to employ the more common form.

The hypothesis \eqref{H2} is known to hold in several cases, and we
provide a sampling of
these here.  In each case, one immediately obtains small data global
existence for \eqref{main.eq} as a corollary of Theorem
\ref{metaTheorem0}.

\subsection{Nontrapping asymptotically Euclidean manifolds}
In the case that $R_0=0$, $b=0$, $c=0$, and the manifold is a product manifold, in the
sense that
\[g = -dt^2 + g_{ij}(x)dx^idx^j,\]
and the metric is nontrapping, then
the localized energy estimates, at least when the summability in
\eqref{H1.1} is replaced by any additional power decay, were shown in
\cite{BoHa}.  
The resulting nonlinear result contained herein,
then, recovers and generalizes the $p>p_c$ portions of \cite{SoWa10}
and \cite{WaYu11}.  The localized energy estimate in this setting is generalized further in the forthcoming
result \cite{MST}, which permits manifolds of general structure, only
requires \eqref{H1.1} and \eqref{H1.2}, allows for the lower order terms (even complex
valued) provided that eigenvalues and resonances are not introduced,
and allows for some time dependence in the coefficients.

Similarly, even with the possibility of a boundary, for compactly
supported, time-independent, nontrapping perturbations of the
Euclidean Laplacian, the localized energy estimates were proved in
\cite[Theorem 3]{Burq}.  The result given there can easily be
supplemented with \eqref{kss} to drop the assumption of compact
support on the initial data.  Moreover, the higher order estimates
follow from standard elliptic estimates upon noticing that
$\partial_t$ commutes with the equation.  Our theorem, then, recovers
the corresponding nonlinear results of \cite{HMSSZ}.  This also
recovers a result of \cite{DMSZ} in the $n=4$ flat case exterior to a
star-shaped obstacle.

\subsection{Time-dependent, small, asymptotically flat manifolds}
In the case where the constants in \eqref{H1.1} and \eqref{H1.2} are sufficiently small,
the localized energy hypothesis follows from the same argument
outlined for proving \eqref{kss}.  See \cite{MetSo06, MetSo07} and
\cite{MetTa07}.  The same can also be proved exterior to a star-shaped
obstacle with Dirichlet boundary conditions \cite{MetSo06, MetSo07},
\cite{MetTa09}.  And as such, the existence portion of the Strauss
conjecture immediately follows from Theorem \ref{metaTheorem0}.

\subsection{Kerr space-times}
The hypothesis \eqref{H2} has been verified (in the $|\mu|=0$ case) on Schwarzschild
backgrounds in \cite{BS, BSerr}, \cite{DaRo, DaRo09}, \cite{MMTT}.
And in the setting of Kerr space-times with $a\ll M$, similar results
have been shown in \cite{TT}, \cite{AB09}, \cite{DaRo08, DaRoNotes,
  DaRoNew}.  See \cite{TT} for the higher order ($|\mu|\neq 0$) case.
In these settings, Theorem \ref{metaTheorem0} recovers the results of
\cite{LMSTW}, and in the case of Kerr, it relaxes some assumptions on
the data.  Here, we also note that \eqref{H2} was proved for a class
of small perturbations of such Kerr metrics in \cite{MTT}, and thus,
the nonlinear existence result extends to such cases as well.  We note
also that \cite{DaRoSR} essentially verifies \eqref{H2} for the full
subextremal case $|a|<M$.

\subsection{$(1+4)$-dimensional Myers-Perry space-times}
Recent studies have proved similar localized energy results on higher
dimensional black hole backgrounds.  These include the results of
\cite{LM}, \cite{Schlue} on (hyperspherical) Schwarzschild backgrounds and
\cite{LMTT} on Myers-Perry backgrounds with small angular momenta.
While these results are given for $|\mu|=0$, the techniques of \cite{TT}
can be mimicked to obtain the higher order estimate.  And as a result,
solutions to \eqref{main.eq} will exist globally for sufficiently
small data provided that $p>2$.  Here $p_c=2$ when the spatial
dimension is $4$.

\subsection{Surfaces of revolution with degenerate trapping}
An interesting class of surfaces of revolution where the generating
function has a unique degenerate minimum were introduced in
\cite{ChrWu}.  On such manifolds, it can be shown that the local
smoothing estimate for the Schr\"odinger equation necessitates an
algebraic loss of smoothness.  The $|\mu|=0$ analog of \eqref{H2} was
explored in \cite{Booth} (non-degenerate case) and \cite{Roddy}.  See
also the forthcoming work \cite{BCMP}.  Using elliptic estimates and the fact that the metrics are static, one
could easily obtain the full of \eqref{H2}.  So, again, a small data global
existence result for \eqref{main.eq} with $p>p_c$ follows
immediately.  The setup here is slightly different in that the
manifolds under consideration have two ends (both asymptotically
Euclidean), but the modifications that would be required are straightforward.

\subsection{Equations with lower order perturbations}
In \eqref{main.eq}, we also permit lower order perturbations of the
d'Alembertian.  A number of preceding results studied the case of the
flat Laplacian with such lower order perturbations, but here we allow
for full generality.  Moreover, the hypothesis \eqref{H2} generalizes
many of the conditions, such as a sign condition or a smallness
condition, on the perturbations.  See, e.g., \cite{ST} where small
potentials were examined; \cite{GHK} which allowed non-negative
compactly supported potentials; and \cite{V} where potentials with
some time-decay are permitted.  The case of damping represents a
fundamentally different problem for which there have been a number of
studies following the seminal work \cite{TY}, but we do not explore
these here.

\section{Sobolev-type estimates}
\label{sec-Sobolev}

In this section, we gather several Sobolev-type and trace estimates.  

To begin, we recall a standard trace estimate on the sphere.  See,
e.g., \cite[(1.3)]{FaWa}
\begin{lem}[Trace estimates]\label{thm-trace}
Let $n\ge 2$ and $1/2< s<n/2$, then
\beeq\label{eq-trace}
\|r^{n/2-s} u\|_{L^\infty_r H^{s-1/2}_\omega}\lesssim \|u\|_{\dot H^s}.
\eneq
\end{lem}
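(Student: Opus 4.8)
The plan is to reduce the global statement \eqref{eq-trace} to a fixed-radius trace inequality on $\Sp^{n-1}$ and then combine it with a Sobolev-type estimate in the radial variable, exploiting the scaling invariance of both sides. First I would record that both sides of \eqref{eq-trace} are invariant under the dilation $u(x)\mapsto u(\lambda x)$: on the left, $r^{n/2-s}\|u(r\cd)\|_{H^{s-1/2}_\omega}$ evaluated at a dilated function reproduces itself after changing $r\mapsto \lambda r$, while $\|u\|_{\dot H^s}$ is $\dot H^s$-scaling invariant for this exponent. Hence it suffices to prove the bound at one radius, say $r=1$, i.e.
\[
\|u\|_{H^{s-1/2}_\omega(\Sp^{n-1})}\lesssim \|u\|_{\dot H^s(\R^n)},
\]
and then recover the $L^\infty_r$ statement by applying this to the rescaled functions and taking the supremum over $\lambda$.

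The single-radius estimate I would prove by decomposing into spherical harmonics. Writing $u=\sum_k u_k$ with $u_k$ supported on the $k$-th eigenspace of $-\Delta_\omega$ (eigenvalue $\approx k^2$), the sphere norm is $\|u\|_{H^{s-1/2}_\omega}^2\approx \sum_k \la k\ra^{2s-1}\|u_k(1\cd)\|_{L^2_\omega}^2$, so the point is the one-dimensional trace bound $|u_k(1)|^2\lesssim \la k\ra^{1-2s}\,\|u_k\|_{\dot H^s}^2$ after factoring out the angular part, or more precisely
\[
\la k\ra^{2s-1}\int_{\Sp^{n-1}}|u_k(\omega)|^2\,d\omega \lesssim \int_0^\infty\!\!\int_{\Sp^{n-1}}\bigl(|\partial_r u_k|^2 + r^{-2}k^2|u_k|^2\bigr)^{\text{(fractional order }s)}\cdots.
\]
Cleanly, I would instead pass to the Fourier/Hankel side: on the $k$-th mode the radial profile $f_k(r)$ satisfies $\|u_k\|_{\dot H^s}^2\approx \int_0^\infty |\xi|^{2s}\,|\widehat{f_k}(\xi)|^2\,|\xi|^{n-1}\,d\xi$ with the $n$-dimensional Fourier transform restricted to that mode behaving like a Hankel transform of order $k+\tfrac{n-2}{2}$, and then the evaluation $f_k(1)$ is controlled by a weighted $L^1$ norm of $\widehat{f_k}$ via Cauchy--Schwarz, with the weight producing exactly the gain $\la k\ra^{1-2s}$ once $s>1/2$ (needed for $L^1$-integrability near the spectral origin of that mode) and $s<n/2$ (needed for integrability of the homogeneous weight $|\xi|^{2s-n}$ at infinity, equivalently for $\dot H^s$ to embed into continuous functions on rays). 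Summing the squared mode bounds against $\|u\|_{\dot H^s}^2=\sum_k\|u_k\|_{\dot H^s}^2$ (orthogonality of spherical harmonics survives the $\dot H^s$ norm since the radial derivative and the angular Laplacian act diagonally) yields the claim at $r=1$.

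The main obstacle is getting the $k$-dependence of the constant exactly right, uniformly in $k$: a naive Sobolev embedding on each mode would lose powers of $k$, so one must track the Bessel/Hankel asymptotics carefully and see that the turning point of the $k$-th Bessel function at $r\sim k$ is precisely what makes the constant $k$-independent after the $r^{n/2-s}$ and $\la k\ra^{s-1/2}$ weights are inserted. Once the uniform-in-$k$ mode estimate is in hand, the dyadic/orthogonality summation and the scaling reduction are routine, so I would present the proof as: (1) scaling reduction to $r=1$; (2) spherical-harmonic decomposition and reduction to a uniform one-dimensional weighted bound; (3) the Hankel-transform estimate with the constraint $1/2<s<n/2$ entering exactly twice; (4) reassembly. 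Alternatively, since the statement is quoted from \cite[(1.3)]{FaWa}, I would note that one may simply cite that reference; the sketch above is how I would reconstruct it.
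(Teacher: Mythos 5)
The paper offers no proof of this lemma at all: it simply quotes the estimate from \cite[(1.3)]{FaWa}, so your final remark already matches what the authors do. Your sketch is, in outline, the standard argument behind that citation and it is sound: the scaling reduction to $r=1$ is legitimate (both sides scale by $\lambda^{s-n/2}$ under $u\mapsto u(\lambda\,\cdot)$), the spherical harmonic decomposition is orthogonal for $\dot H^s$ since the multiplier $|\xi|^{2s}$ is radial and the Fourier transform acts modewise as a Hankel transform of order $\nu=k+\tfrac{n-2}{2}$, and the whole matter reduces, via Cauchy--Schwarz on the Hankel inversion formula, to the uniform bound $\int_0^\infty J_\nu(\xi)^2\,\xi^{1-2s}\,d\xi\lesssim \nu^{1-2s}$. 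Two small corrections to your narrative: first, the roles of the two constraints are swapped in your parenthetical remarks --- convergence near the spectral origin uses $J_\nu(\xi)\sim\xi^\nu$ with $\nu\ge\tfrac{n-2}{2}$ and hence requires $s<n/2$ (sharp only for the lowest mode), while convergence at spectral infinity uses $|J_\nu(\xi)|^2\lesssim\xi^{-1}$ and requires $s>1/2$. Second, the ``main obstacle'' you identify (turning-point asymptotics to get the constant uniform in $k$) can be bypassed entirely: the Weber--Schafheitlin formula gives the exact value $\int_0^\infty J_\nu^2(\xi)\,\xi^{1-2s}\,d\xi=\frac{\Gamma(2s-1)\,\Gamma(\nu+1-s)}{2^{2s-1}\Gamma(s)^2\,\Gamma(\nu+s)}\approx\nu^{1-2s}$ for large $\nu$, which when multiplied by the weight $\langle k\rangle^{2s-1}$ from the $H^{s-1/2}_\omega$ norm yields a $k$-independent constant directly. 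With these adjustments your plan reassembles exactly as claimed; for the purposes of this paper, citing \cite{FaWa} suffices.
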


We shall also need the following variant of the Sobolev embeddings.
This has essentially been proved in \cite[Lemma 3.1]{LMSTW} in the
case $n=3$ and $\varepsilon=0$ and is akin to the original estimates
of \cite{Klainerman}.  In the case of $n=4$, extra care is
required so that the number of derivatives does not exceed $2$.

\begin{lem}[Weighted Sobolev estimates]\label{thm-Sobo}
Let $n\ge 2$ and $R\ge 3$. We have
\beeq\label{eq-Sobo}
\|r^{\beta}u\|_{L^q_{r} L^\infty_\omega(r\ge R+1)}\les \sum_{|\mu|\le [(n+1)/2]} \|r^{\beta-(n-1)/p+(n-1)/q} Y^\mu u\|_{L^p_{r}L^{2+\ep}_\omega(r\ge R)}
\eneq
for any $\beta\in\R$, $ 2+\ep\le p\le q\le \infty$ and $\ep>0$. When $\ep=0$, we need to require $|\mu|\le [(n+2)/2]$ instead.
Moreover, we have 
\beeq\label{eq-Sobo2}
\|r^{\beta}u\|_{L^q_{r} L^4_\omega(r\ge R+1)}\les \sum_{|\mu|\le [(n+3)/4]} \|r^{\beta-(n-1)/p+(n-1)/q} Y^\mu u\|_{L^p_{r}L^{2}_\omega(r\ge R)}
\eneq
for any $2\le p\le q\le 4$ and $\beta\in\R$.
\end{lem}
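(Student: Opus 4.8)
The plan is to reduce the weighted mixed-norm estimates \eqref{eq-Sobo} and \eqref{eq-Sobo2} to a combination of a one-dimensional radial Sobolev inequality in $r$ and a Sobolev embedding on the sphere $\Sp^{n-1}$. First I would dispose of the weight: since we work on the region $r\ge R$ with $R\ge 3$, the quantities $r^{\beta}$ and $r^{\beta-(n-1)/p+(n-1)/q}$ are comparable up to the harmless power $r^{(n-1)/q-(n-1)/p}$, which is exactly the factor that converts an $L^p_r(r^{n-1}dr)$-type norm into an $L^q_r(r^{n-1}dr)$-type norm via Hölder on dyadic annuli together with a radial fundamental-theorem-of-calculus step. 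Concretely, writing $v=r^{\beta}u$, the left side is $\|v\|_{L^q_rL^\infty_\omega}$ and on each annulus $r\approx 2^k\gtrsim R$ one has $\|v\|_{L^\infty_r(r\approx 2^k)}\lesssim \|v\|_{L^p_r(r\approx 2^k)}^{1-1/p}\|\partial_r v\|_{L^p_r(r\approx 2^k)}^{1/p}$ up to scaling factors in $2^k$; the scaling constants are precisely arranged by the shift in the power of $r$ in the definition of the norms. Summing in $k$ (using $q\ge p$ so that $\ell^q\supseteq\ell^p$) then yields the radial part of the estimate. The derivative $\partial_r$ falling on $v=r^\beta u$ produces $r^\beta\partial_r u$ and a lower-order term $r^{\beta-1}u$, and since $\partial_r$ is a combination of the $\nabla_x$ fields in $Y$ (the angular derivatives being controlled by $\Omega/r$), this is where the vector fields $Y^\mu$ enter.

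Second I would handle the angular variable. For \eqref{eq-Sobo} the target is $L^\infty_\omega$, so I would invoke the Sobolev embedding $H^{s}(\Sp^{n-1})\hookrightarrow L^\infty(\Sp^{n-1})$ for $s>(n-1)/2$, combined with the refinement $W^{1,2+\eps}\hookrightarrow L^\infty$-type chain that lets one trade one full derivative for the slightly-better-than-$L^2$ integrability; iterating, $[(n+1)/2]$ rotational derivatives in $L^{2+\eps}_\omega$ suffice to reach $L^\infty_\omega$ when $\eps>0$, whereas at $\eps=0$ one is exactly at the endpoint and must spend one extra derivative, hence the index $[(n+2)/2]$. Here one uses that the $\Omega_{ij}$ span the tangent space of the sphere and that $|\Omega u|\approx r|\ang u|$, so angular derivatives cost no weight. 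For \eqref{eq-Sobo2} the target is only $L^4_\omega$, a weaker space, so fewer derivatives are needed: on $\Sp^{n-1}$ with $n\le 4$, $H^s(\Sp^{n-1})\hookrightarrow L^4(\Sp^{n-1})$ holds for $s\ge (n-1)(1/2-1/4)=(n-1)/4$, which gives $\lceil (n-1)/4\rceil$ angular derivatives; combining with the single radial derivative from the one-dimensional step gives a total of $[(n+3)/4]$ vector fields. The restriction $p\le 4$ in \eqref{eq-Sobo2} is what permits keeping $L^2_\omega$ on the right-hand side.

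The main obstacle I anticipate is bookkeeping the derivative count so that it never exceeds $2$ when $n=4$, which is essential because Hypothesis \eqref{H2} only supplies up to two derivatives. In \eqref{eq-Sobo} with $n=4$ one has $[(n+1)/2]=2$, so with $\eps>0$ the estimate is sharp-but-legal; the $\eps=0$ version would need $[(n+2)/2]=3$ derivatives and is therefore stated separately and will not be used in that dimension. In \eqref{eq-Sobo2} with $n=4$, $[(n+3)/4]=1$, which is comfortably within budget, but one must be careful that the single vector field is genuinely enough — this forces the use of $L^4_\omega$ rather than $L^\infty_\omega$ on the left and the $p\le q\le 4$ range. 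A secondary technical point is the interplay between the inhomogeneous radial Sobolev step and the fact that we only integrate over $r\ge R$: the boundary term at $r=R$ from the fundamental theorem of calculus must be absorbed, which is why the right-hand side is taken over the slightly larger region $r\ge R$ while the left-hand side is over $r\ge R+1$, so that the $L^p_r$ mass near $r=R$ dominates any such boundary contribution. Once these indices are tracked, the proof is a routine assembly of Hölder's inequality on dyadic annuli, the one-dimensional Gagliardo–Nirenberg inequality in $r$, and Sobolev embedding on the compact manifold $\Sp^{n-1}$.
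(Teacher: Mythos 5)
There is a genuine gap, and it is precisely in the derivative count, which is the whole content of the lemma. Your tensor-product scheme (one-dimensional Gagliardo--Nirenberg in $r$, then a Sobolev embedding on $\Sp^{n-1}$) spends one vector field on the radial step and, separately, the full spherical count on the angular step, so the two budgets add. For \eqref{eq-Sobo} with $n=4$ the embedding $L^{2+\ep}_\omega\to L^\infty_\omega$ on $\Sp^{3}$ needs $k>3/(2+\ep)$, i.e.\ $k=2$ angular derivatives, and together with the radial derivative you arrive at $3$ vector fields, exceeding $[(n+1)/2]=2$. For \eqref{eq-Sobo2} your own accounting gives $1+\lceil(n-1)/4\rceil=2$ vector fields for $n=3,4$, which is not $[(n+3)/4]=1$; the asserted ``total of $[(n+3)/4]$'' does not follow from the count you set up. These losses are fatal for the intended application: in Section 5, \eqref{eq-Sobo2} is applied to a factor measured only in the $X_1$ norm (one vector field) and \eqref{eq-Sobo} to a factor in $X_2$, and the paper explicitly stresses that for $n=4$ the number of derivatives must never exceed $2$. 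The paper avoids the loss by \emph{not} splitting the variables: it applies a single Sobolev embedding on the $n$-dimensional unit cylinder $[j,j+1]\times\Sp^{n-1}$, where $W^{[(n+1)/2],\,2+\ep}\hookrightarrow L^\infty$ (and $W^{1,2}\hookrightarrow L^4$ when $n\le 4$), so the radial and angular derivatives share one budget; the weight and the passage from exponent $2+\ep$ to $p$ and from $\infty$ to $q$ are then handled by H\"older on the unit $r$-intervals and the embedding $\ell^p\subset\ell^q$ ($p\le q$) in the sum over $j\ge R+1$. This also explains the $R$ versus $R+1$ discrepancy: the local estimate on $[j,j+1]$ uses data on $[j-1,j+2]$; it is not an issue of absorbing a boundary term at $r=R$.

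A secondary problem is the scaling claim in your radial step. The weight shift $-(n-1)/p+(n-1)/q$ accounts only for the measure $r^{n-1}\,dr$ in the paper's mixed-norm convention, not for the length $\approx 2^k$ of a dyadic annulus; a fundamental-theorem-of-calculus or Gagliardo--Nirenberg step performed at dyadic scale therefore produces an uncompensated factor of order $2^{k(1-1/p)}$ on the derivative term, since the fields $Y$ are unit-strength (they are $\nabla_x$ and $\Omega$, not $r\partial_r$). The correct place to do the local step is on unit $r$-intervals, as in the paper, after which the summation in $j$ supplies the passage from $L^p$ to $L^q$.
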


\begin{prf}
By Sobolev estimates on ${\mathbb R}\times \Sp^{n-1}$, we have 
for each $j\in {\mathbb N}$ the uniform bounds
$$\|v\|_{L^\infty_rL^\infty_\omega([j,j+1]\times \Sp^{n-1})}
\lesssim \sum_{|\mu|\le [(n+1)/2]}\Bigl(\int_{j-1}^{j+2}\int_{\Sp^{n-1}}|Y^\mu v|^{2+\ep} \, d\omega dr\Bigr)^{\frac1{2+\ep}}.
$$
Hence, for any $\beta\in\R$ and $j\ge 3$,
\beeq\label{ap-trace-local} 
\|r^\beta v\|_{L^\infty_r([j,j+1])L^\infty_\omega} \lesssim
  \sum_{|\mu|\le  [(n+1)/2]} \|r^{\beta-(n-1)/(2+\ep)} Y^\mu v\|_{L^{2+\ep}_r([j-1,j+2])L^{2+\ep}_\omega}.\eneq
The factor $r^{-(n-1)/(2+\ep)}$ on the right comes from the fact that the volume
element for $\R^n$ is $r^{n-1} dr d\omega$.  By H\"older's
inequality, it follows that for every $1\le q\le\infty$ and $p\ge 2+\ep$,
\beeq\label{ap-S3} \|r^\beta v\|_{L_r^{q}([j,j+1])L_{\omega}^{\infty}} \lesssim \sum_{|\mu|\leq  [(n+1)/2]} \|r^{\beta+\frac{n-1}{q}-\frac{n-1}{p}} Y^\mu v\|_{L_r^{p}([j-1,j+2])L_{\omega}^{2+\ep}} .\eneq
This yields the inequality \eqref{eq-Sobo} if we $l^p$-sum over $j\ge R+1$ using the
Minkowski integral inequality and the fact $p\le q$.  

Inequality \eqref{eq-Sobo2} follows from a similar argument.  The proof of \eqref{ap-trace-local} also yields
$$\|v\|_{L^4_r([j,j+1])L^4_\omega}\lesssim j^{-\frac{n-1}{4}}
\sum_{|\mu|\le [(n+3)/4]} 
\|Y^\mu v\|_{L^2_r([j-1,j+2])L^2_\omega},
$$
which implies \eqref{eq-Sobo2} after an application of H\"older's
inequality, weighting appropriately, and $l^p$-summing over $j$.
\end{prf}

\section{Weighted Strichartz estimates}
The key linear estimate near infinity is a weighted Strichartz
estimate akin to those introduced in \cite{HMSSZ} and \cite{FaWa}.  As
a corollary to our techniques, 
the same holds for small, asymptotically decaying
perturbations of the Minkowski metric.  In essence, this is what the
main theorem below states as 
the estimate is only applied outside of a large compact set
where the metric perturbation may be taken to be small due to the
asymptotic flatness.

\begin{thm}\label{thm-wStri} Let $n\ge 3$, and suppose that
  \eqref{H1}, \eqref{H1.1}, \eqref{H1.2}, and \eqref{H2} hold.  Then
  there exists $R>R_1$ so that if
  $\psi_R$ is identically $1$ on $B_{2R}^c$ and vanishes on $B_R$,
we have
\begin{multline}
  \label{weightedStrichartz}
  \|\psi_R Z^{\le 2} w\|_{{\ell}_p^{\frac{n}{2}-\frac{n+1}{p}-s}L^p_{t,r}
    H_\omega^{\sigma}}\lesssim \|w(0,\cd)\|_{H^3}
  + \|\partial_t w(0,\cd)\|_{H^2} \\+
  \|\psi_R  Y^{\le 2} w(0,\cd)\|_{\dot{H}^s} + \| \psi_R Y^{\le 2} \partial_t
  w(0,\cd)\|_{\dot{H}^{s-1}}  + \| \psi_R Z^{\le 1} Pw(0,\cd)\|_{\dot{H}^{s-1}}
\\+
  \|\psi_R^p Z^{\le 2} P
  w\|_{\ell_1^{-\frac{n-2}{2}-s}L^1_{t,r}H^{s-1/2}_\omega} +
  \|\partial^{\le 2} Pw\|_{L^1_t L^2_x}
\end{multline}
  for any $p\in (2,\infty)$, $s\in (1/2-1/p,1/2)$, and $0\le \sigma< \min(s-1/2+1/p,1/2-1/p)$.
\end{thm}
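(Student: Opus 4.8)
The plan is to reduce the estimate \eqref{weightedStrichartz} to a pure weighted Strichartz estimate for the flat wave operator $\Box$ on $\R_+\times\R^n$, applied to the commuted, cut-off function $\psi_R Z^\mu w$, and then absorb all the error terms using Hypothesis \eqref{H2}. The first step is to fix $R>R_1$ large and write $v = \psi_R Z^\mu w$ for each $|\mu|\le 2$. Since $\psi_R$ is supported where $\la x\ra\gtrsim R$, on $\supp v$ the metric $g$ is an $\eps$-small, asymptotically decaying perturbation of the Minkowski metric $m$ by Hypothesis \eqref{H1}, \eqref{H1.1}, \eqref{H1.2}, provided $R$ is chosen large enough that the $\ell^i_1 L^\infty$ tails beyond radius $R$ are small. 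We compute $\Box v = \psi_R Z^\mu (Pw) + [\Box, \psi_R Z^\mu] w + (\Box - P)(\psi_R Z^\mu w)$. The commutator $[\Box,\psi_R]$ is supported in the compact annulus $\{R\le \la x\ra\le 2R\}$ and costs at most two derivatives of $w$ there; the commutator $[P, Z^\mu]$ with $Z\in\{\partial,\Omega\}$ is controlled because $\partial$ commutes with constant-coefficient pieces and the radial symmetry of the long-range part $g_0$ (Hypothesis \eqref{H1}) ensures $[\Omega, P]$ involves only the short-range $g_1$ and the already-small tails; and $(\Box - P)v = -((g-m)^{\alpha\beta}\partial_\alpha\partial_\beta + \text{l.o.t.})v$ is a small perturbation on $\supp v$ that can be moved to the right-hand side and absorbed by the Strichartz norm itself when $R$ is large.

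The second step is the flat weighted Strichartz estimate. For $\Box v = G$ with data $(v(0),\partial_t v(0))$, I would use the estimate of the type from \cite{HMSSZ}, \cite{FaWa} (in the robust form developed via the $TT^*$/duality or the \cite{MetTa07} argument referenced in the introduction): roughly,
\[
\|v\|_{\dot\ell^{\frac n2-\frac{n+1}{p}-s}_p L^p_{t,r}H^\sigma_\omega}
\lesssim \|v(0)\|_{\dot H^s} + \|\partial_t v(0)\|_{\dot H^{s-1}}
+ \|G\|_{\dot\ell^{-\frac{n-2}{2}-s}_1 L^1_{t,r}H^{s-1/2}_\omega},
\]
for the stated range $p\in(2,\infty)$, $s\in(1/2-1/p,1/2)$, $0\le\sigma<\min(s-1/2+1/p,1/2-1/p)$. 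The Sobolev-type and trace estimates of Section~\ref{sec-Sobolev} (Lemmas~\ref{thm-trace} and \ref{thm-Sobo}) are exactly what convert between the angular $H^\sigma_\omega$, $L^{2+\eps}_\omega$, $L^4_\omega$ norms and the $Y^{\le 2}$-controlled quantities, and they also handle the $n=4$ bookkeeping keeping derivative counts $\le 2$. Applying this with $v=\psi_R Z^\mu w$, the homogeneous $\dot H^s$ data norms give the $\|\psi_R Y^{\le 2} w(0)\|_{\dot H^s}$ and $\|\psi_R Y^{\le 2}\partial_t w(0)\|_{\dot H^{s-1}}$ terms on the right of \eqref{weightedStrichartz} (the $Y$ rather than $Z$ appears because one time derivative of $v$ at $t=0$ is rewritten via the equation $Pw(0)$, producing the $\|\psi_R Z^{\le 1}Pw(0)\|_{\dot H^{s-1}}$ term), and the source term gives $\|\psi_R^p Z^{\le 2}Pw\|_{\ell^{-\frac{n-2}{2}-s}_1 L^1_{t,r}H^{s-1/2}_\omega}$.

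The third step is to dispose of the compactly supported commutator errors. The terms $[\Box,\psi_R]Z^\mu w$ and the contribution of $[P,Z^\mu]w$ and $(\Box-P)v$ near radius $\approx 2R$ are all of the form (smooth compactly supported in $x$) $\times\, \partial^{\le 2} w$ or $\times\, w$, localized in the fixed annulus $\{\la x\ra\approx R\}$. These are controlled by the localized energy estimate \eqref{H2}: the $\|\partial^\mu u\|_{\ell^{-3/2}_\infty L^2_{t,x}}$ and $\|(1-\chi)\partial\partial^\mu u\|_{\ell^{-1/2}_\infty L^2_{t,x}}$ pieces bound exactly such weighted spacetime $L^2$ norms on a compact annulus, and the right-hand side of \eqref{H2} then produces the $\|w(0)\|_{H^3}$, $\|\partial_t w(0)\|_{H^2}$, and $\|\partial^{\le 2}Pw\|_{L^1_t L^2_x}$ terms of \eqref{weightedStrichartz}. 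One must feed the flat source term $G$ into the Strichartz estimate in the mixed space $L^1_t L^2_x + \ell^{1/2}_1 L^2_{t,r}\cdots$, splitting $G$ into its genuinely-near-infinity part (estimated by the weighted $\ell^1$ norm) and its compact part (estimated by \eqref{H2}), which is why \eqref{weightedStrichartz} carries both a weighted-$\ell^1_1$ term and a plain $L^1_t L^2_x$ term on the right.

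I expect the main obstacle to be the bookkeeping of the commutators $[P,Z^\mu]$ for $|\mu|=2$ together with the requirement that the total number of vector fields hitting $w$ never exceeds $2$. The point of assuming the long-range part $g_0$ is \emph{radial} (Hypothesis \eqref{H1}) is precisely to make $[\Omega, \partial_\alpha g_0^{\alpha\beta}\partial_\beta\,]$ vanish, so that the only angular-derivative losses come from the short-range $g_1$ and the lower-order $b,c$, all of which are $\ell^{\ge 1}_1 L^\infty$-small and decaying and hence absorbable; carrying this through cleanly for second-order commutators, while simultaneously keeping the inhomogeneous-vs-homogeneous $\ell^s_p$ distinctions straight and verifying that the smallness of the metric tail on $\supp\psi_R$ really does let the $(\Box-P)v$ term be absorbed into the left-hand Strichartz norm (rather than creating a circular estimate), is the delicate part. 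Everything else is a matter of assembling the flat estimate with Lemmas~\ref{thm-trace}--\ref{thm-Sobo} and Hypothesis~\eqref{H2}.
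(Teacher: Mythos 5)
There is a genuine gap at the heart of your first step. You propose to write $\Box v = \psi_R Z^\mu(Pw) + [\Box,\psi_R Z^\mu]w + (\Box-P)v$ with $v=\psi_R Z^\mu w$, feed this into a flat weighted Strichartz estimate, and ``absorb'' the term $(\Box-P)v$ into the left-hand Strichartz norm because the metric tail is small on $\supp\psi_R$. This cannot work as stated: $(\Box-P)v$ contains the second-order term $(m-g)^{\alpha\beta}\partial_\alpha\partial_\beta v$, while the left side of the flat estimate controls only $v$ itself (no derivatives) in a weighted $L^p_{t,r}H^\sigma_\omega$ space, and the admissible forcing norm $\dot\ell^{-\frac{n-2}{2}-s}_1L^1_{t,r}H^{s-1/2}_\omega$ gives no way to dominate two derivatives of $v$ by quantities appearing on the left, no matter how small the coefficients are. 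Smallness buys you a small constant, but absorption also requires that the error be measured in norms the estimate itself controls, and here there is a loss of two derivatives. Even if you augment the left side with a localized-energy norm of $\partial v$ (as in the proof of Corollary~\ref{cor-wStri}), the duality Lemma~\ref{thm-MetTa2-2} only lets you absorb first- and zeroth-order perturbations $b\partial v$ and $cv$; the principal part $(g-m)\partial^2 v$ is exactly what cannot be treated perturbatively in this way. This is why the paper does not reduce to $\Box$ at all: it applies the variable-coefficient localized energy estimate of Metcalfe--Tataru (Theorem~\ref{thm-MetTa}) directly to $P(\psi_R Z^{\le 2}w)$, so that the small second-order perturbation is handled inside that black box (via the parametrix construction of \cite{MetTa07}), and only the commutator coefficients $\tilde b,\tilde c$ from $[P,\partial^\mu\Omega^\nu]$ are absorbed, using \eqref{MetTa2-b}--\eqref{MetTa2-c}.

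A second, related divergence: you take the weighted Strichartz estimate for $\Box$ as a black box from \cite{HMSSZ}, \cite{FaWa}, whereas the paper's entire point (stated in the introduction) is to \emph{derive} the needed $L^p_{t,r}H^\sigma_\omega$ bound from the localized energy framework itself, by proving the endpoint cases $p=2$ (the weighted $L^2_{t,x}$ bound of Theorem~\ref{thm-MetTa} combined with Lemma~\ref{thm-MetTa2}) and $p=\infty$ (the $L^\infty_{t,r}H^{s-1/2}_\omega$ bound via the trace estimate \eqref{eq-trace} and the $L^\infty_t\dot H^s$ energy), and then interpolating via Lemma~\ref{lem-Xinterp}; this is also where the admissible range of $\sigma$ comes from, which your sketch does not produce. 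Your treatment of the compactly supported commutators through \eqref{H2}, and the conversion of the $\partial_t^2,\partial_t^3$ data terms using the equation (producing the $Y^{\le 2}$ and $Z^{\le 1}Pw(0)$ terms), do match the paper; but without replacing the flat-reduction-plus-absorption step by an argument that keeps the second-order perturbation inside a variable-coefficient estimate, the proof does not close.
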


For clarity, as a corollary of the techniques of proof of Theorem
\ref{thm-wStri}, we state separately what would result for
sufficiently small
perturbations, which removes the need to cutoff to the exterior of
$B_R$, when there are no vector fields with which to contend.
Moreover, in the case of sufficiently small perturbations, the
hypothesis \eqref{H2} is known due to \cite{MetSo06}, \cite{MetTa07}.

\begin{coro}\label{cor-wStri} Let $n\ge 3$, and suppose that
$g^{\alpha\beta} = g^{\beta\alpha}\in C^2$, $b^\alpha\in C^1$, and
$c\in C$.
Then there exists $\delta_1>0$ so that if $(g,b,c)$ satisfies
\[\|g-m\|_{\dot{\ell}^0_1 L^\infty_{t,x}} + \|(\partial g,
b)\|_{\dot{\ell}^1_1 L^\infty_{t,x}} + \|(\partial^2 g,\partial
b, c)\|_{\dot{\ell}^2_1 L^\infty_{t,x}}\le \delta_1,\]
we have
\begin{multline}
  \label{weightedStrichartz1}
  \|w\|_{\dot{\ell}_p^{\frac{n}{2}-\frac{n+1}{p}-s}L^p_{t,r}
    H_\omega^{\sigma}}\lesssim
  \|w(0,\cd)\|_{\dot{H}^s} + \|\partial_t w(0,\cd)\|_{\dot{H}^{s-1}} \\+
  \|P w\|_{\dot{\ell}_1^{-\frac{n-2}{2}-s} L^1_{t,r} H^{s-1/2}_\omega
    + \dot{\ell}^{3/2-s}_2 L^2_t L^2_x}
\end{multline}
  for any $p\in (2,\infty)$, $s\in (1/2-1/p,1/2)$, and $0\le \sigma< \min(s-1/2+1/p,1/2-1/p)$.
\end{coro}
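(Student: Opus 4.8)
The plan is to deduce the corollary directly from (the proof of) Theorem~\ref{thm-wStri}, exploiting the fact that under a global smallness assumption on $(g-m,b,c)$ there is no need to localize away from a compact core: one may take $R\to 0$, so the cutoffs $\psi_R$ and $\chi$ become trivial, the localized energy hypothesis \eqref{H2} holds globally by \cite{MetSo06}, \cite{MetTa07}, and — crucially — there are no rotational vector fields to commute, since one works with $w$ itself rather than $Z^{\le 2}w$. First I would record that the smallness hypothesis written in terms of the homogeneous dyadic norms $\dot\ell^0_1, \dot\ell^1_1, \dot\ell^2_1$ is exactly what makes the perturbation $g-m=g_0+g_1$ fit (with small constants) into the framework of \eqref{H1.1}–\eqref{H1.2}, after absorbing the whole perturbation into the short-range piece $g_1$ and taking $g_0=0$; the radial-symmetry requirement on the long-range part is vacuous here because there is no long-range part. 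Thus all hypotheses needed to run the argument behind Theorem~\ref{thm-wStri} are in force.

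The heart of the matter is the same as in Theorem~\ref{thm-wStri}: one writes $Pw = \Box w + (\text{error})$, where the error terms are $\partial_\alpha((g^{\alpha\beta}-m^{\alpha\beta})\partial_\beta w) + b^\alpha\partial_\alpha w + cw$, and one applies the flat weighted Strichartz estimate of \cite{HMSSZ}, \cite{FaWa} — in the homogeneous, vector-field-free form
\[
\|w\|_{\dot{\ell}_p^{\frac{n}{2}-\frac{n+1}{p}-s}L^p_{t,r}H^\sigma_\omega}
\lesssim \|w(0,\cd)\|_{\dot H^s} + \|\partial_t w(0,\cd)\|_{\dot H^{s-1}}
+ \|\Box w\|_{\dot{\ell}_1^{-\frac{n-2}{2}-s}L^1_{t,r}H^{s-1/2}_\omega + \dot{\ell}^{3/2-s}_2 L^2_tL^2_x}
\]
— and then controls the error contribution. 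The error is split: its part supported where $\langle x\rangle$ is small (contributing to the $\dot{\ell}^{3/2-s}_2L^2$ norm) is bounded by the global energy/localized energy estimate coming from the now-global \eqref{H2}, while its part at large $\langle x\rangle$ is bounded in the $\dot{\ell}_1^{-\frac{n-2}{2}-s}L^1_{t,r}H^{s-1/2}_\omega$ norm by pulling out the small coefficients (in $\dot\ell^k_1L^\infty_{t,x}$) and absorbing one copy of $\|w\|_{\dot{\ell}_p^{\cdots}L^p_{t,r}H^\sigma_\omega}$, plus $\|w'\|$ in the localized energy norm, back into the left side. The restriction $\sigma<\min(s-1/2+1/p,1/2-1/p)$ is precisely what is needed for this absorption to be legitimate (it gives the requisite room in the angular regularity to trade derivatives on the coefficient against derivatives on $w$ via Lemma~\ref{thm-trace} and Lemma~\ref{thm-Sobo}), and the parameter range $p\in(2,\infty)$, $s\in(1/2-1/p,1/2)$ is inherited unchanged from the flat estimate.

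The main obstacle, as in the theorem, is the bootstrap/absorption step: one must verify that the error terms, after distributing the (at most two) derivatives allowed — note that with $b^\alpha\in C^1$, $c\in C$ only, and with no vector fields, the accounting is exactly "two derivatives total," which is why the corollary is stated without $Z^{\le 2}$ — genuinely land in the two norms on the right with a constant proportional to $\delta_1$, so that for $\delta_1$ small the self-referential terms can be moved to the left. This requires the weighted Sobolev/trace estimates of Section~\ref{sec-Sobolev} to convert the $\ell^k_1 L^\infty_{t,x}$ smallness of the coefficients into the $L^p_{t,r}H^\sigma_\omega$-to-$L^1_{t,r}H^{s-1/2}_\omega$ mapping with a small norm, together with a finiteness-a-priori argument (run the estimate for truncated coefficients or on a finite time interval, absorb, then pass to the limit) to make the absorption rigorous. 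Once this is in place, replacing all the $\psi_R Z^{\le 2}$, $\psi_R^p Z^{\le 2}$, and inhomogeneous $\ell^s_q$ norms in \eqref{weightedStrichartz} by their trivial ($R=0$), vector-field-free, homogeneous counterparts yields \eqref{weightedStrichartz1}. \QED
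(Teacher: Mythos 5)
Your overall reduction (no boundary, no cutoff, no vector fields, \eqref{H2} known from \cite{MetSo06}, \cite{MetTa07}) matches the paper's intent, but your core mechanism --- perturbing off the flat weighted Strichartz estimate of \cite{HMSSZ}, \cite{FaWa}, treating $\partial_\alpha((g^{\alpha\beta}-m^{\alpha\beta})\partial_\beta w)+b^\alpha\partial_\alpha w+cw$ as a source, and absorbing --- is not the paper's argument and, as written, does not close. The absorption step has a concrete gap. First, the principal error term carries two derivatives of $w$, while the left side of your flat estimate controls zero derivatives of $w$ (and at most one even if you also carry energy and localized energy norms); there is no way to bound $(g-m)\partial^2 w$ in either component of the dual norm by quantities on the left without an integration-by-parts/duality device --- and that is exactly the ``divide through by a derivative'' argument of \cite{WaYu11}, \cite{LMSTW} that the paper is deliberately replacing, since it forced compactly supported data. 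Second, even the first-order terms fail: to place $b\,\partial w$ in $\dot{\ell}^{3/2-s}_2 L^2_{t,x}$ using $\|b\|_{\dot{\ell}^1_1L^\infty_{t,x}}\le\delta_1$ you would need $\|\partial w\|_{\dot{\ell}^{1/2-s}_\infty L^2_{t,x}}$, whose exponent $1/2-s>0$ demands decay of the dyadic $L^2_{t,x}$ norms, whereas localized energy estimates only furnish the $\dot{\ell}^{-1/2}_\infty$ bound --- a mismatch of $r^{1-s}$; and the $\dot{\ell}_1^{-\frac{n-2}{2}-s}L^1_{t,r}H^{s-1/2}_\omega$ component is unusable because $b$ has no time integrability with which to reach $L^1_t$, and $\partial w$ cannot be traded for $w$ there. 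The trace and weighted Sobolev lemmas of Section \ref{sec-Sobolev}, which you invoke for this step, supply neither the missing derivative nor the missing weight.

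The paper's route avoids all of this by never treating the second-order perturbation as a source: it applies the Metcalfe--Tataru localized energy estimate, Theorem \ref{thm-MetTa}, with $\delta=s-1\in(-1,0)$, so the full variable-coefficient principal part stays inside the operator; Lemma \ref{thm-MetTa2} converts the $X^{s-1}$ bound on $\partial w$ into the weighted $L^2_{t,x}$ endpoint, the trace estimate \eqref{eq-trace} together with the energy bound gives the $L^\infty_{t,r}H^{s_2-1/2}_\omega$ endpoint for $s_2\in(1/2,1)$, Lemma \ref{lem-Xinterp} interpolates to the $L^p_{t,r}H^\sigma_\omega$ norm, the dual trace estimate and \eqref{eq-MetTa3} produce precisely the dual norm in \eqref{weightedStrichartz1}, and the only absorbed terms are $b\partial w$ and $cw$, handled by Lemma \ref{thm-MetTa2-2} in the $(X^{-(s-1)})'$/$X^{s-1}$ duality, where the smallness $\delta_1$ enters. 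To repair your outline you should replace the ``flat Strichartz plus error term'' step by this $X^\delta$-based argument (or reinstate the duality argument of \cite{WaYu11} and accept its restrictions); as it stands, the key absorption claim is unjustified.
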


It is the technique to prove Theorem \ref{thm-wStri} that is the
biggest departure from \cite{LMSTW}, and it is these techniques that
permit us to drop the assumption that the data are compactly
supported.  The key new ideas that are being implemented are from
\cite{MetTa07}.  To begin, we have the following localized energy
estimate for small, asymptotically flat metric perturbations.

\begin{thm}[\cite{MetTa07}, Corollary 1]\label{thm-MetTa}
  Let $n\ge 3$ and $-1<\delta<0$.  Under the same
  conditions as in Corollary \ref{cor-wStri}, we have
  \begin{equation}
    \label{MT_LE}
   \|\partial w\|_{L^\infty_t\dot{H}^\delta \cap X^\delta} \lesssim \|\partial
   w(0,\cd)\|_{\dot{H}^\delta} + \|P w\|_{L^1_t\dot{H}^\delta + (X^{-\delta})'}.
  \end{equation}
Here
\[\|f\|_{X^\delta}^2 = \sum_{k=-\infty}^\infty 2^{2k\delta} \|S_k f\|^2_{X_k}\]
with
\[\|f\|_{X_k}= 2^{k/2} \|f\|_{L^2_{t,x}(A_{\le -k})} + \sup_{j\ge -k}
  \||x|^{-1/2} f\|_{L^2_{t,x}(A_j)},\]
$A_j= \R_+\times \{|x|\approx 2^j\}$, $A_{\le -k} = \cup_{j\le -k}
A_j$, and homogeneous Littlewood-Paley projections $S_kf$.
\end{thm}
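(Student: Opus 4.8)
This estimate is Corollary~1 of \cite{MetTa07}, so it may simply be quoted; for completeness I indicate how I would recover it. The plan is to reduce, via a Littlewood--Paley decomposition, to a single-frequency version of the $\delta=0$ localized energy estimate, to prove the latter by a positive commutator (multiplier) argument in which the perturbation $(g-m,b,c)$ is absorbed using the $\dot\ell_1$-type smallness condition of Corollary~\ref{cor-wStri}, and then to reassemble the pieces with the weight $2^{2k\delta}$ dictated by the definition of $X^\delta$.

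First I would establish the core bound at $\delta=0$ \emph{and} localized to a single dyadic frequency: for $w$ with $S_kw$ essentially equal to $w$ up to rapidly decaying tails,
\[\|\partial w\|_{L^\infty_tL^2_x\cap X_k}\lesssim \|\partial w(0,\cd)\|_{L^2}+\|Pw\|_{L^1_tL^2_x+(X_k)'}.\]
Here one pairs $Pw$ against the Morawetz-type multiplier $f_R(r)\partial_r w+\tfrac{n-1}{2}\tfrac{f_R(r)}{r}w$ with $f_R(r)=r/(r+R)$, sums over dyadic $R$, and integrates by parts. For the Minkowski part $\Box w$ this produces, after the usual manipulations, the positive quantities controlling $\|\partial w\|_{X_k}$: the $\sup_{j\ge -k}\||x|^{-1/2}\cd\|_{L^2(A_j)}$ piece from the main term, and the $2^{k/2}\|\cd\|_{L^2(A_{\le -k})}$ piece after a Hardy inequality on the small-$r$ region. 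The contribution of $g-m$, $b$, $c$ is a sum of terms each carrying one factor of a coefficient (or of $\partial g,\partial^2 g,\partial b,c,\dots$), which by Cauchy--Schwarz is bounded by the corresponding $\dot\ell_1 L^\infty_{t,x}$ norm times $\|\partial w\|_{X_k}^2$ (or the lower-order analog), hence absorbed once $\delta_1$ is small, with constants uniform in $k$ because the coefficient norms are scale-invariant in the relevant sense.

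Next I would remove the frequency localization. Applying $S_k$ to $Pw=F$ gives $PS_kw=S_kF+[P,S_k]w$; since $[\Box,S_k]=0$, the symbol of $[P,S_k]$ is order $\le 1$ with coefficients built only from $\partial g,b,c,\partial^2 g,\partial b,\dots$, all lying in $\dot\ell_1 L^\infty_{t,x}$ with small norm. Applying the single-frequency $\delta=0$ estimate to $S_kw$, summing the commutator errors over nearby frequencies, and using almost-orthogonality, the commutator contributions are again absorbed. Multiplying the resulting inequality for $S_kw$ by $2^{k\delta}$, squaring, and summing over $k\in\Z$ reproduces $\|\partial w\|_{X^\delta}$ (and $\|\partial w\|_{L^\infty_t\dot H^\delta}$) on the left, and on the right $\|\partial w(0,\cd)\|_{\dot H^\delta}$, $\|F\|_{L^1_t\dot H^\delta}$, and $\|F\|_{(X^{-\delta})'}$ by the very definition of $X^\delta$ as a weighted $\ell^2$ sum of the $X_k$. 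The range $-1<\delta<0$ enters precisely here: $\delta<0$ makes the borderline terms in the multiplier argument subcritical and keeps the high-frequency sum convergent, while $\delta>-1$ is exactly what makes the low-frequency ($k\to-\infty$) part of $\sum_k 2^{2k\delta}\|S_k\partial w(0,\cd)\|_{L^2}^2$ converge—note it is $\partial w$, not $w$, that is differentiated, so only $\delta>-1$ is required.

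The main obstacle is this second step: controlling $[P,S_k]w$ uniformly in $k$ and summing the errors, since $S_k$ is nonlocal while the coefficient bounds are stated in the localized $\dot\ell_1$ form. The resolution is standard—split $[P,S_k]$ into a main piece (one derivative on a coefficient, tested against the kernel of $S_k$, whose tails decay rapidly off the dyadic spatial scale $2^{-k}$) plus acceptable remainders, and use that every factor of a coefficient carries the small constant $\delta_1$—but it is the step demanding the most care. A secondary technical point is the sharp, $k$-uniform Hardy inequality needed to produce the $2^{k/2}\|\cd\|_{L^2(A_{\le -k})}$ term of $X_k$; this is where the localization to frequency $2^{k}$ is genuinely used.
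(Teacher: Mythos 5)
The paper offers no proof of this statement at all---it is imported verbatim as Corollary 1 of \cite{MetTa07}---so your primary move of simply quoting that result is exactly what the paper does. Your supplementary sketch (Littlewood--Paley localization, a frequency-adapted Morawetz-type multiplier, absorption of the small asymptotically flat perturbation, and reassembly with the $2^{2k\delta}$ weights) is consistent in outline with the argument actually carried out in \cite{MetTa07}, and nothing in it conflicts with how the estimate is used here.
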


For the $X^s$ norm, we observe that we have the following.
\begin{lem}\label{thm-MetTa2}
Let $n\ge 2$ and $0<\de<(n-1)/2$. Then
\begin{equation}
\label{eq-MetTa2}
\|r^{-1/2-\de} u\|_{L^2_{t,x} }\les \|u\|_{X^{\de}
},\quad
\|r^{-1/2-\de} u\|_{L^2_{t,x} }\les \|\nabla u\|_{X^{\de-1}}.
\end{equation}
Moreover,
\begin{equation}
\label{eq-MetTa3}
\|u\|_{(X^{\de})'}\les
\|r^{1/2+\de} u\|_{L^2_{t,x} }.
\end{equation}
\end{lem}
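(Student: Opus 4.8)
The plan is to prove the three inequalities of Lemma~\ref{thm-MetTa2} directly from the definition of the $X^\delta$ and $X_k$ norms, exploiting that the $X_k$ norm dominates a weighted $L^2_{t,x}$ norm on each dyadic shell. First I would unwind the definition: since $\|f\|_{X_k} = 2^{k/2}\|f\|_{L^2_{t,x}(A_{\le -k})} + \sup_{j\ge -k}\||x|^{-1/2}f\|_{L^2_{t,x}(A_j)}$, we have for each $j$ the pointwise-in-$k$ bound $\||x|^{-1/2} S_k f\|_{L^2_{t,x}(A_j)} \lesssim \|S_k f\|_{X_k}$ when $j\ge -k$, and $\|S_k f\|_{L^2_{t,x}(A_j)} \lesssim 2^{-k/2}\|S_k f\|_{X_k}$ when $j\le -k$, i.e. $\||x|^{-1/2}S_k f\|_{L^2_{t,x}(A_j)}\lesssim 2^{j/2}2^{-k/2}\|S_k f\|_{X_k}$ on that range since $|x|\approx 2^j$. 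The key point is that on $A_j$ we can write $r^{-1/2-\delta} = r^{-1/2}\cdot r^{-\delta} \approx r^{-1/2}2^{-j\delta}$, so we need to control $\sum_j 2^{-2j\delta}\||x|^{-1/2} u\|_{L^2(A_j)}^2$.

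For the first estimate in \eqref{eq-MetTa2}, I would decompose $u=\sum_k S_k u$ and, on the shell $A_j$, split the sum over $k$ at $k=-j$. For $k\ge -j$ we use $\||x|^{-1/2}S_k u\|_{L^2(A_j)}\lesssim \|S_k u\|_{X_k}$; for $k< -j$ we use the bound $\lesssim 2^{(j+k)/2}2^{-k}\cdot(\text{no})$—more carefully, $\||x|^{-1/2}S_k u\|_{L^2(A_j)}\lesssim 2^{-j/2}\|S_k u\|_{L^2(A_j)}\lesssim 2^{-j/2}2^{-k/2}\|S_k u\|_{X_k}$. Then $r^{-1/2-\delta}$ contributes $2^{-j\delta}$, and after multiplying by $2^{k\delta}$ and $2^{-k\delta}$ to match the $X^\delta$ weighting, one obtains geometric series in $|j+k|$ that converge precisely because $0<\delta<(n-1)/2$ guarantees $\delta>0$ for the $k\ge -j$ piece and… wait, the constraint $\delta<(n-1)/2$ is not obviously used here, so I would double-check whether the relevant exponent is $\delta$ vs. $(n-1)/2-\delta$; the natural Schur-test exponents are $2^{-(j+k)\delta}$ on one side and $2^{(j+k)(\text{something})}$ on the other, and for the sum to converge we need both exponents positive, which is exactly $0<\delta<(n-1)/2$ once the volume factor $2^{j(n-1)}$ hidden in passing between $\|\cdot\|_{L^2(A_j)}$ and the $X_k$ low-frequency term is accounted for. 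This Schur-type summation is the one genuinely delicate step, and I expect it to be the main obstacle—getting the two-sided geometric decay to close requires tracking the $2^{k/2}$ versus $2^{j/2}$ factors and the dimension-dependent volume growth correctly.

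The second estimate in \eqref{eq-MetTa2} follows from the first together with the observation that $\|\nabla u\|_{X^{\delta-1}}$ controls $\|u\|_{X^\delta}$ up to constants: by Littlewood-Paley, $\|S_k\nabla u\|_{X_k}\approx 2^k\|S_k u\|_{X_k}$ (the derivative is a Fourier multiplier of size $2^k$ and commutes with the spatial localizations up to harmless errors), so $\|\nabla u\|_{X^{\delta-1}}^2 = \sum_k 2^{2k(\delta-1)}\|S_k\nabla u\|_{X_k}^2 \approx \sum_k 2^{2k\delta}\|S_k u\|_{X_k}^2 = \|u\|_{X^\delta}^2$, and then apply the first inequality. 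One should be slightly careful that $\nabla$ here denotes spatial gradient and that $\||x|^{-1/2}\cdot\|$ and $\nabla$ don't quite commute, but the commutator $[\nabla, |x|^{-1/2}]$ is lower order and absorbed; I'd remark this rather than belabor it.

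Finally, for the dual estimate \eqref{eq-MetTa3}, I would argue by duality: for any test function $g$, $|\langle u, g\rangle| = |\langle r^{1/2+\delta} u,\ r^{-1/2-\delta} g\rangle| \le \|r^{1/2+\delta} u\|_{L^2_{t,x}}\,\|r^{-1/2-\delta} g\|_{L^2_{t,x}} \lesssim \|r^{1/2+\delta} u\|_{L^2_{t,x}}\,\|g\|_{X^\delta}$ by the first inequality of \eqref{eq-MetTa2}, and taking the supremum over $g$ with $\|g\|_{X^\delta}\le 1$ gives $\|u\|_{(X^\delta)'}\lesssim \|r^{1/2+\delta} u\|_{L^2_{t,x}}$. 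This step is routine once the first inequality is in hand. The overall structure is thus: prove the weighted $L^2$ embedding by dyadic decomposition and a Schur-test summation (the crux), deduce the gradient version by Littlewood-Paley rescaling, and obtain the predual bound by Cauchy–Schwarz and duality.
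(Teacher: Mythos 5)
Your duality argument for \eqref{eq-MetTa3} coincides with the paper's (which notes that \eqref{eq-MetTa3} follows from the first bound in \eqref{eq-MetTa2} by duality), and that step is fine. The two substantive steps, however, both have gaps. First, the Schur-test proof you propose for the first inequality in \eqref{eq-MetTa2} does not close: on the low-frequency range $k<-j$ your own estimate gives $2^{-j\delta}\||x|^{-1/2}S_k u\|_{L^2_{t,x}(A_j)}\lesssim 2^{-(j+k)(\delta+1/2)}\,\bigl(2^{k\delta}\|S_k u\|_{X_k}\bigr)$, and since $j+k<0$ there, the factor grows like $2^{|j+k|(\delta+1/2)}$, so the sum over $k<-j$ cannot be controlled by the $\ell^2$ norm of $2^{k\delta}\|S_ku\|_{X_k}$. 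The missing ingredient --- which you yourself flag when you observe that $\delta<(n-1)/2$ has not visibly been used --- is a Bernstein/Hardy-type gain exploiting the frequency localization of $S_k u$ near $r=0$: a frequency-$2^k$ function cannot concentrate on a shell of radius $2^j\ll 2^{-k}$, and it is this uncertainty-principle gain (not any ``volume factor hidden in the $X_k$ low-frequency term,'' which carries none) that makes the low-$r$ region summable precisely when $0<\delta<(n-1)/2$. The paper does not reprove this estimate at all; it quotes it from \cite{Ta08} (Lemma 1, the case $\delta=1/2$) and \cite{MetTa07} (equation (13)).

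Second, for the gradient bound you assert $\|S_k\nabla u\|_{X_k}\approx 2^k\|S_k u\|_{X_k}$ ``up to harmless errors,'' but the direction actually needed, $\|S_k u\|_{X_k}\lesssim 2^{-k}\|\nabla S_k u\|_{X_k}$, is exactly the paper's \eqref{eq-add1}, and proving it is the only genuine work the paper does in this lemma. It is not automatic: the $X_k$ norm involves sharp spatial cutoffs to the annuli $A_j$ and to $A_{\le -k}$ with different weights, and inverting the derivative introduces the nonlocal operator $\Psi^i_k$ with symbol $2^k\xi_i|\xi|^{-2}\psi(\xi/2^k)$, whose boundedness on $X_k$ the paper establishes by explicit kernel estimates (Schwartz decay of the kernel, Young's inequality on the annuli $A_j$ with $j>-k$, and a Cauchy--Schwarz argument on $A_{\le -k}$). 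Your remark that $[\nabla,|x|^{-1/2}]$ is lower order does not address this; to make the reduction of the second inequality in \eqref{eq-MetTa2} to the first rigorous, you would need to supply this kernel argument or an equivalent.
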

\begin{proof}
The first estimate in \eqref{eq-MetTa2} has appeared
in
\cite[Lemma 1]{Ta08} (when $\de=1/2$) and
 \cite{MetTa07} as equation (13).  
The estimate \eqref{eq-MetTa3} follows from the first by duality and has appear
in  \cite{MetTa07} as equation (15).
The second
estimate in \eqref{eq-MetTa2} follows immediately from the first as it
is elementary to show
\beeq\label{eq-add1}\|S_k u\|_{X_k} \lesssim 2^{-k} \|\nabla S_k u\|_{X_k}.\eneq

For the convenience of the reader, we present a proof of
\eqref{eq-add1}.  We write the symbol, $\phi_k(\xi) = \phi(\xi/2^k)$,
of $S_k$ as
\[\phi_k(\xi) = \sum_{i=1}^n \Bigl(2^k \xi_i |\xi|^{-2}
\psi(\xi/2^k)\Bigr) 2^{-k}\xi_i \phi_k(\xi)\]
where $\psi\in C^\infty_0$ is identically $1$ on the support of $\phi$
and vanishes in a neighborhood of $0$.  If we let $\Psi^i_k$ be the
operator with symbol $2^k \xi_i |\xi|^{-2}
\psi(\xi/2^k)$, it suffices to show that $\|\Psi_k^i\|_{X_k\to X_k} =
\O(1)$.

We expand
\[\mathbf{1}_{A_j} \Psi^i_k u = \mathbf{1}_{A_j} \Psi^i_k
\mathbf{1}_{A_{\le -k}} u + \sum_{l>-k} \mathbf{1}_{A_j} \Psi^i_k
\mathbf{1}_{A_l} u.\]
The operators $\mathbf{1}_{A_j} \Psi^i_k
\mathbf{1}_{A_l}$ have kernel
\[K_{jl}(x,y) = 2^{kn} \mathbf{1}_{A_j}(x)\mathbf{1}_{A_l}(y)
a(2^k(x-y))\]
for some Schwartz function $a$.  In the case that $l=-k$, the
indicator function $\mathbf{1}_{A_l}$ is replaced by
$\mathbf{1}_{A_{\le -k}}$.  The analogous substitution is made when $j=-k$.

Now suppose that $j>-k$.  Since for any given $N\gg 1$, 
\[2^{l/2} \|K_{jl}(x,y)\|_{L^\infty_x L^1_y \cap L^\infty_y L^1_x}
\lesssim 
\begin{cases}
2^{l/2}      \ ,&   l-j\le 2\\
2^{-l N}2^{-k(N+1/2)}     \ , &   l-j\ge 3,
\end{cases}
\]
it follows from Young's inequality that
\[\|\mathbf{1}_{A_j}(x) \Psi^i_k\|_{X_k\to L^2} \lesssim \sum_{l\ge
    -k} 2^{l/2} \|K_{jl}\|_{L^\infty_x L^1_y\cap L^\infty_y L^1_x}
  \lesssim 2^{j/2},\]
as desired.

When $j=-k$, we similarly note that
\[2^{l/2} \|K_{(-k)l}(x,y)\|_{L^2_xL^2_y}
\lesssim 
\begin{cases}
2^{-k/2}\, & l=-k,\\
2^{(k+l)n/2} 2^{-lN} 2^{-k(N+1/2)}    \ , &   l>-k.
\end{cases}
\]
Thus, by the Schwarz inequality,
\[\|\mathbf{1}_{A_{\le -k}} \Psi^i_k\|_{X_k\to L^2} \lesssim \sum_{l \ge
  -k} 2^{l/2} \|K_{(-k)l}\|_{L^2_xL^2_y} \lesssim 2^{-k/2},
\]
which completes the proof.

 \end{proof}

We also have
\begin{lem}[\cite{MetTa07}, Lemma 2]\label{thm-MetTa2-2}\noindent
  \begin{enumerate}
  \item[(a)] Suppose that $\|b\|_{\dot{\ell}_1^1 L^\infty_{t,x}} + \|\partial
    b\|_{\dot{\ell}^2_1 L^\infty_{t,x}}\le \delta_1$ and that $|\delta|\le 1$,
    $|\delta|<\frac{n-1}{2}$.  Then
    \begin{equation}
      \label{MetTa2-b}
      \|b\partial u\|_{(X^{-\delta})'} \lesssim \delta_1 \|\partial u\|_{X^\delta}.
    \end{equation}
\item[(b)] Suppose $n\ge 3$, $\|c\|_{\dot{\ell}^2_\infty L^\infty_{t,x}} \le \delta_1$, and
  $-1<\delta<0$.  Then
  \begin{equation}
    \label{MetTa2-c}
   \|cu\|_{(X^{-\delta})'} \lesssim \delta_1 \|\partial u\|_{X^\delta}.
  \end{equation}
  \end{enumerate}
\end{lem}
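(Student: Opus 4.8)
The plan is to obtain part (b) directly from the estimates already in hand, and to prove part (a) by exploiting the dyadic structure built into the spaces $X^\delta$. For part (b): since $-1<\delta<0$ we have $0<-\delta<1\le(n-1)/2$, so \eqref{eq-MetTa3}, applied with $-\delta$ in place of $\delta$, gives $\|cu\|_{(X^{-\delta})'}\lesssim\|r^{1/2-\delta}cu\|_{L^2_{t,x}}$. The hypothesis $\|c\|_{\dot{\ell}^2_\infty L^\infty_{t,x}}\le\delta_1$ amounts to the pointwise bound $|c(t,x)|\lesssim\delta_1 r^{-2}$, whence $\|r^{1/2-\delta}cu\|_{L^2_{t,x}}\lesssim\delta_1\|r^{-3/2-\delta}u\|_{L^2_{t,x}}$. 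Finally the second estimate in \eqref{eq-MetTa2}, used with $\delta+1$ in place of $\delta$ (legitimate since $0<\delta+1<(n-1)/2$ when $-1<\delta<0$ and $n\ge3$), bounds $\|r^{-3/2-\delta}u\|_{L^2_{t,x}}\lesssim\|\nabla u\|_{X^{\delta}}\le\|\partial u\|_{X^{\delta}}$; chaining the three inequalities yields \eqref{MetTa2-c}.

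For part (a) the weight parameter $\delta$ may have either sign, so the weighted $L^2$ estimates of Lemma \ref{thm-MetTa2} do not apply directly. Instead I would argue dyadically. Writing $\widetilde S_k$ for a fattened Littlewood--Paley multiplier with $\widetilde S_k S_k=S_k$, duality followed by Cauchy--Schwarz in $k$ reduces \eqref{MetTa2-b} to a frequency-localized bound of the form
\[\|\widetilde S_k(b\,\partial u)\|_{X_k'}\lesssim\delta_1\,\|S_{\sim k}\partial u\|_{X_k}+\text{(off-diagonal terms, rapidly decaying in $|k-k'|$)},\]
since the weights $2^{2k\delta}$ appearing in $\|b\partial u\|_{(X^{-\delta})'}$ and in $\|\partial u\|_{X^\delta}$ coincide; the off-diagonal pieces are then disposed of by a Schur test. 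Here, up to harmless overlaps, $\|h\|_{X_k'}\approx 2^{-k/2}\|h\|_{L^2_{t,x}(A_{\le-k})}+\sum_{j\ge-k}\||x|^{1/2}h\|_{L^2_{t,x}(A_j)}$, so on the exterior annuli $X_k'$ is an $\ell^1$-sum dual to the $\ell^\infty$-supremum appearing in $X_k$.

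To bound $\widetilde S_k(b\,\partial u)$ I would decompose $b\,\partial u$ by the paraproduct trichotomy into (i) $S_{<k-C}b\cdot S_{\sim k}\partial u$, (ii) $S_{\sim k}b\cdot S_{<k-C}\partial u$, and (iii) $\sum_{l>k-C}S_l b\cdot S_{\sim l}\partial u$. For the leading term (i), on an exterior annulus $A_j$, $j\ge-k$, one writes $\|\phi_j b\|_{L^\infty_{t,x}}=\delta_1 c_j 2^{-j}$ with $\sum_j c_j\lesssim1$ (this is precisely $\|b\|_{\dot{\ell}^1_1 L^\infty_{t,x}}\le\delta_1$), so that $\||x|^{1/2}S_{<k-C}b\cdot S_{\sim k}\partial u\|_{L^2(A_j)}\lesssim\delta_1 c_j\,\||x|^{-1/2}S_{\sim k}\partial u\|_{L^2(\widetilde A_j)}$; summing the $\ell^1$-in-$j$ tail against $\sum_j c_j\lesssim1$ converts it into the supremum over $j$, which is controlled by $\|S_{\sim k}\partial u\|_{X_k}$. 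The terms (ii) and (iii), in which derivatives fall on $b$, are handled the same way, now using $\|\partial b\|_{\dot{\ell}^2_1 L^\infty_{t,x}}\le\delta_1$ (i.e.\ $|\partial b|\lesssim\delta_1 r^{-2}$ with dyadically summable constants) and Bernstein's inequality to recover the frequency weights.

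The main obstacle --- and the only place the hypotheses $|\delta|\le1$, $|\delta|<(n-1)/2$ enter --- is the interior ball region $A_{\le-k}$ together with the genuinely low-frequency regime $k\le0$: there $b$ is merely bounded by $\delta_1$ rather than decaying, so the required smallness must be extracted from the $2^{\pm k/2}$ weights in $X_k$, $X_k'$ and from the frequency localization of $S_{\sim k}\partial u$ on a ball large compared with its wavelength. The restrictions on $\delta$ are exactly what make the resulting dyadic sums in $k$ converge; once this region is controlled, reassembling the pieces and performing the Schur summation in $k$ finishes the proof.
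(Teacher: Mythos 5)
This lemma is not proved in the paper at all: it is quoted verbatim from \cite{MetTa07} (Lemma 2), so there is no internal proof to compare against, and your attempt has to be judged on its own. Part (b) of your proposal is correct and complete: the chain $\|cu\|_{(X^{-\delta})'}\lesssim\|r^{1/2-\delta}cu\|_{L^2_{t,x}}\lesssim\delta_1\|r^{-3/2-\delta}u\|_{L^2_{t,x}}\lesssim\delta_1\|\nabla u\|_{X^{\delta}}$ uses \eqref{eq-MetTa3} with parameter $-\delta\in(0,1)$ and the second estimate of \eqref{eq-MetTa2} with parameter $\delta+1\in(0,1)$, both admissible for $n\ge3$, and the pointwise bound $|c|\lesssim\delta_1 r^{-2}$ is exactly the meaning of $\|c\|_{\dot{\ell}^2_\infty L^\infty_{t,x}}\le\delta_1$. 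This is a legitimate self-contained derivation from the lemmas already stated in the paper.

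Part (a), however, has a genuine gap. Your paraproduct framework and the treatment of the exterior annuli $A_j$, $j\ge -k$, via the $\ell^1$-summable bounds $\|\phi_j b\|_{L^\infty}\lesssim\delta_1 c_j2^{-j}$ are plausible, but the entire difficulty of the lemma is concentrated precisely where you stop: the interior region $A_{\le -k}$ and the frequency-interaction sums in which the hypotheses $|\delta|\le 1$, $|\delta|<\frac{n-1}{2}$ must be used. There you only assert that ``the required smallness must be extracted from the $2^{\pm k/2}$ weights'' and that ``the restrictions on $\delta$ are exactly what make the resulting dyadic sums converge''; these are statements of what has to be proved, not arguments. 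Note also that on $A_{\le-k}$ the homogeneous bound $\|b\|_{\dot\ell^1_1L^\infty}\le\delta_1$ does not give $|b|\lesssim\delta_1$ uniformly (it permits growth like $c_j2^{-j}$ near the origin), so even the starting point of your interior discussion is off, and the unbalanced paraproduct pieces (ii) and (iii) require a quantitative gain in $|k-l|$ (from Bernstein and the $\partial b$ hypothesis) that you do not exhibit, without which the Schur summation in $k$ cannot be performed when $\delta$ has the unfavorable sign. As it stands, part (a) is an outline whose missing core is exactly the content of \cite{MetTa07}, Lemma 2; either carry out those estimates or cite that lemma as the paper does.
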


Finally, we record the following regarding the interpolation of $X^s$
spaces.
\begin{lem}\label{lem-Xinterp}
  For $X^s$ as in Theorem~\ref{thm-MetTa}, we have $[X^{s_1}, X^{s_2}]_\theta  = X^{\theta    s_1 + (1-\theta)s_2}$ and $[(X^{s_1})',(X^{s_2})']_\theta =
  (X^{\theta s_1 + (1-\theta)s_2})'$, for any $\theta\in (0,1)$.
\end{lem}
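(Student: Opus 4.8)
The plan is to prove Lemma~\ref{lem-Xinterp} by realizing $X^s$ as a retract of a weighted $\ell^2$-sum of Banach spaces, to which the standard interpolation theory for such sums (see, e.g., Bergh--L\"ofstr\"om) applies directly. Recall that $X^s$ is defined by $\|f\|_{X^s}^2 = \sum_k 2^{2ks}\|S_k f\|_{X_k}^2$, where $S_k$ are homogeneous Littlewood--Paley projections. The key structural observation is that the building-block norm $X_k$ is a \emph{fixed} space (independent of $s$): only the weight $2^{ks}$ and the frequency localization vary with $s$. So set $E_k = X_k$ and consider the weighted sequence space $\ell^2_s(\{E_k\}) := \{(g_k)_k : \sum_k 2^{2ks}\|g_k\|_{E_k}^2 < \infty\}$. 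The map $J: f \mapsto (S_k f)_k$ embeds $X^s$ isometrically into $\ell^2_s(\{E_k\})$, and the map $Q: (g_k)_k \mapsto \sum_k \tilde S_k g_k$ (with $\tilde S_k$ a fattened projection satisfying $\tilde S_k S_k = S_k$) is a bounded retraction $\ell^2_s(\{E_k\}) \to X^s$ with $Q J = \mathrm{id}$; these bounds are uniform in $s$ on compact ranges, and in fact $J$, $Q$ are literally the same maps for every $s$. By the retract theorem in interpolation theory, $[X^{s_1}, X^{s_2}]_\theta = Q\big([\ell^2_{s_1}(\{E_k\}), \ell^2_{s_2}(\{E_k\})]_\theta\big)$.

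Next I would invoke the known interpolation identity for weighted $\ell^2$-sums with a \emph{fixed} family of component spaces: $[\ell^2_{s_1}(\{E_k\}), \ell^2_{s_2}(\{E_k\})]_\theta = \ell^2_{\theta s_1 + (1-\theta)s_2}(\{E_k\})$. This is exactly the ``power-weighted $\ell^q$'' interpolation theorem (it is the discrete analogue of $[L^2(w_0), L^2(w_1)]_\theta = L^2(w_0^{1-\theta}w_1^\theta)$, with $w_i = 2^{2ks_i}$ so that $w_0^{1-\theta}w_1^\theta = 2^{2k(\theta s_1 + (1-\theta)s_2)}$); here the fact that the component spaces $E_k = X_k$ do not depend on the parameter is what makes this clean, since no interpolation of the $X_k$ themselves is needed. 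Combining with the previous paragraph gives $[X^{s_1},X^{s_2}]_\theta = X^{\theta s_1 + (1-\theta)s_2}$, where one uses $QJ = \mathrm{id}$ together with the fact that $\tilde S_k S_k = S_k$ to see that $Q$ restricted to the image $J(\ell^2_{\sigma}(\{E_k\})) $ reproduces $X^\sigma$ with equivalent norm (the retract argument gives the result up to equivalence of norms, which is all that is claimed).

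For the dual statement $[(X^{s_1})', (X^{s_2})']_\theta = (X^{\theta s_1 + (1-\theta)s_2})'$, I would use the duality theorem for complex interpolation: if at least one of $X^{s_1}, X^{s_2}$ is reflexive (they are, being closed subspaces of Hilbert-like $\ell^2$-sums, hence reflexive), then $[(X^{s_1})',(X^{s_2})']_\theta = ([X^{s_1},X^{s_2}]_\theta)'$; apply the already-proved primal identity to conclude. Alternatively, and perhaps more transparently, run the retract argument directly on the dual side: $(X^s)'$ is a retract of $(\ell^2_s(\{E_k\}))' = \ell^2_{-s}(\{E_k'\})$ via the adjoint maps $Q^*, J^*$, and then apply the same weighted-$\ell^2$ interpolation theorem to the fixed family $\{E_k'\}$ with weights $2^{-2ks_i}$. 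Either route works; I would present the duality route as the short one and remark on the direct one.

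The main obstacle — really the only point requiring care rather than citation — is verifying that $J$ and $Q$ are genuinely bounded with $QJ = \mathrm{id}$ \emph{uniformly} and that they are the \emph{same} operators across the parameter $s$ (the retract theorem needs a single pair of maps intertwining both endpoint spaces simultaneously). Boundedness of $J$ is immediate from the definition of $X^s$. Boundedness of $Q$ requires an almost-orthogonality estimate: $\big\|\sum_k \tilde S_k g_k\big\|_{X^\sigma}^2 \lesssim \sum_k 2^{2k\sigma}\|g_k\|_{X_k}^2$, which in turn reduces to showing $\|S_j \tilde S_k g_k\|_{X_j} \lesssim 2^{-N|j-k|}\|g_k\|_{X_k}$ for the overlapping dyadic blocks — a kernel-decay estimate in the spirit of, and no harder than, the proof of \eqref{eq-add1} already carried out in the proof of Lemma~\ref{thm-MetTa2} (indeed $\tilde S_k$, like $\Psi^i_k$, has a rapidly decaying Schwartz kernel at scale $2^{-k}$, so the same $L^\infty L^1$ and $L^2 L^2$ kernel bounds on the annuli $A_j$ apply). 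Once these mapping properties are in hand, everything else is a direct appeal to the retract theorem and the power-weighted $\ell^2$ interpolation theorem, so I would keep the exposition brief and reference \cite{MetTa07} for the $X_k$ machinery.
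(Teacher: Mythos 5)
Your overall strategy is the same as the paper's: realize $X^s$ as a retract of a weighted sequence space via a single pair of maps that do not depend on $s$, quote the interpolation identity for power-weighted $\ell^2$-valued sequence spaces, transfer by the retract theorem, and treat the duals by duality. The difference is only in the bookkeeping. The paper unravels the $X_k$ norm one step further, taking the co-retraction $f\mapsto \bigl(\beta(\la 2^k x\ra/2^j)S_k f\bigr)_{j,k}$, so that $X^s$ is a retract of $\dot{\ell}^{s+\frac12}_2\ell^{-\frac12}_\infty L^2_{t,x}$ whose component space $\ell^{-\frac12}_\infty L^2_{t,x}$ is one fixed Banach space; then \cite[Theorem 5.6.3]{BL} applies verbatim. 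You instead keep the $k$-dependent spaces $X_k$ as components, which works but requires (i) the vector-valued version of the weighted-$\ell^2$ interpolation theorem for a varying family $\{E_k\}$ (the statement in \cite[Theorem 5.6.3]{BL} is for a constant component space; the varying-family version is standard, e.g.\ in Triebel, but should be cited), and (ii) boundedness of the retraction $(g_k)\mapsto \sum_k \tilde{S}_k g_k$, which you correctly reduce to $X_k\to X_j$ bounds for $S_j\tilde{S}_k$ with $|j-k|\le C$ (note $S_j\tilde{S}_k=0$ for $|j-k|$ large, so no exponential gain is needed), provable by the same kernel computations as for \eqref{eq-add1} in Lemma~\ref{thm-MetTa2}.

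One justification is wrong as written: for the dual identity you assert that $X^{s_i}$ is reflexive ``being a closed subspace of a Hilbert-like $\ell^2$-sum.'' The blocks $X_k$ carry a $\sup_{j}$, i.e.\ $\ell^\infty$-type, norm, so the ambient $\ell^2$-sum is neither Hilbertian nor reflexive, and closed subspaces of non-reflexive spaces need not be reflexive; reflexivity of $X^s$ is at best unclear, so the duality-theorem route as you justify it does not stand (one would also need to verify the density hypotheses of that theorem). This is not fatal, because your alternative route is sound and parameter-free: apply the retract argument to the adjoint pair $Q^*, J^*$, using $(\ell^2_s(\{E_k\}))'=\ell^2_{-s}(\{E_k'\})$ and the same weighted interpolation theorem for the fixed family $\{E_k'\}$, and identify the resulting space with $(X^{\theta s_1+(1-\theta)s_2})'$ using $J^*Q^*=\mathrm{id}$ and the boundedness of $J,Q$ at the intermediate exponent. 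Present that adjoint-retract argument as the proof of the dual statement and drop the reflexivity remark (the paper itself simply invokes \cite[Theorem 4.5.1]{BL} here).
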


\begin{proof}
  Fixing $\beta\in C^\infty_0(\R)$ with $\supp\, \beta \subset [1/2,2]$
  and $\sum_{j\in \Z} (\beta(|x|/2^j))^2 = 1$ for any $0\neq x\in \R^n$, we notice that
\[\|u\|^2_{X^s} =   \|\beta(\la 2^k\cd\ra/2^j)
S_k u\|^2_{\dot{\ell}^{s+\frac{1}{2}}_2 \ell^{-\frac{1}{2}}_\infty L^2_{t,x}}\]
where the $\dot{\ell}^{s+\frac{1}{2}}_2$ summation is over $k\in \Z$ and
$\ell^{-\frac{1}{2}}_\infty$ is in $j\ge 0$.  It follows that $X^s$ is a
retract of $\dot{\ell}^{s+\frac{1}{2}}_2 \ell^{-\frac{1}{2}}_\infty
L^2_{t,x}$. 
See, e.g., \cite[Definition 6.4.1]{BL} for the definition of retract. 
  Indeed, the co-retraction and retraction are given by
\[Qf = \Bigl(\beta(\la 2^k x\ra/2^j) S_k f\Bigr)_{j\in \N, k\in
  \Z},\quad Q:X^s\to \dot{\ell}^{s+\frac{1}{2}}_2 \ell^{-\frac{1}{2}}_\infty
L^2_{t,x}\ ,\]
\[R(a_{lm})_{l\in \N,m\in \Z} = \sum_{m\in \Z} S_m \sum_{l\ge 0}
\beta\Bigl(\frac{\la 2^m x\ra}{2^l}\Bigr) a_{lm},\quad
R:\dot{\ell}^{s+\frac{1}{2}}_2 \ell^{-\frac{1}{2}}_\infty L^2_{t,x}\to X^s.\]
Moreover, $RQ f = f$ for $f\in X^s$.  
By \cite[Theorem 5.6.3]{BL}, we know that, with $s=\theta   s_1 + (1-\theta)s_2$,
$$[\dot{\ell}^{s_1+\frac{1}{2}}_2 \ell^{-\frac{1}{2}}_\infty
L^2_{t,x}, \dot{\ell}^{s_2+\frac{1}{2}}_2 \ell^{-\frac{1}{2}}_\infty
L^2_{t,x}]_\theta=
\dot{\ell}^{s+\frac{1}{2}}_2 \ell^{-\frac{1}{2}}_\infty
L^2_{t,x}\ .$$
Then we have
$[X^{s_1}, X^{s_2}]_\theta  = X^{s}$ by 
\cite[Theorem 6.4.2]{BL}.
For the dual estimate,
$[(X^{s_1})', (X^{s_2})']_\theta  = (X^{s})'$, it follows
from \cite[Theorem 4.5.1]{BL}.
\end{proof}

With these results in place, we now proceed to the
proof of the main linear estimate.

\begin{proof}[Proof of Theorem \ref{thm-wStri}]

We first note that, using \eqref{H1}, 
\[ [P, \partial^\mu \Omega^\nu]w = \sum_{\substack{|\tilde{\mu}|+|\tilde{\nu}|\le
    |\mu|+|\nu|\\|\tilde{\nu}|\le |\nu|}} \tilde{b}^\alpha_{\tilde{\mu}\tilde{\nu}} \partial^{\tilde{\mu}} \Omega^{\tilde{\nu}} \partial_\alpha w +
\tilde{c}_{\tilde{\mu}\tilde{\nu}} \partial^{\tilde{\mu}} \Omega^{\tilde{\nu}}w.\]
And given $\delta_1\ll 1$, by \eqref{H1.1} and \eqref{H1.2}, we may fix $R>2R_1$
sufficiently large so that
\[\|\mathbf{1}_{>R} \tilde{b}\|_{\ell^1_1 L^\infty_{t,x}} + \|\mathbf{1}_{>R} (\partial
\tilde{b},\tilde{c})\|_{\ell^2_1 L^\infty_{t,x}} \le \delta_1.\]

We suppose that $Pw = F$.  Thus,
\[P\psi_R \partial^\mu \Omega^\nu w =
[P,\psi_R] \partial^\mu \Omega^\nu w + \psi_R [P,\partial^\mu\Omega^\nu]w +
\psi_R \partial^\mu\Omega^\nu F.\]
Then
with
$n\ge 3$ and $s_1\in (0,1)$, we apply 
Theorem \ref{thm-MetTa} with $\delta=s_1-1$ to get
\begin{multline*}
\|\partial (\psi_R Z^{\le 2} w)\|_{L^\infty_t \dot{H}^{s_1-1}_x \cap X^{s_1-1}}
\les 
\|\psi_R Z^{\le 2} w(0)\|_{\dot H^{s_1}}
+\|\psi_R \pt Z^{\le 2} w(0)\|_{\dot H^{s_1-1}}
\\+\|\psi_R Z^{\le 2} F\|_{L^1_t\dot H^{s_1-1}}
+\|[P,\psi_R] Z^{\le 2}
w\|_{(X^{-(s_1-1)})'} 
\\+ \|\psi_R \tilde{b} Z^{\le 2} \partial w\|_{(X^{-(s_1-1)})'} +
\|\psi_R \tilde{c}
Z^{\le 2} w\|_{(X^{-(s_1-1)})'}.\end{multline*}
Subsequently using Lemma \ref{thm-MetTa2}, we then obtain
\begin{multline*}
\|r^{-1/2-s_1} \psi_R Z^{\le 2}w\|_{L^2_{t,x} }+ \|\partial (\psi_R
Z^{\le 2} w)\|_{X^{s_1-1}}+
\|\psi_R Z^{\le 2} w\|_{L^\infty_t \dot H^{s_1}}
\les 
\|\psi_R Z^{\le 2} w(0)\|_{\dot H^{s_1}}
\\+\|\psi_R \pt Z^{\le 2} w(0)\|_{\dot H^{s_1-1}}
+\|\psi_R Z^{\le 2} F\|_{L^1_t\dot H^{s_1-1}}
+ 
\|r^{3/2-s_1}[P,\psi_R]
Z^{\le 2} w\|_{L^2_{t,x}}
\\+ \|\psi_R \tilde{b} Z^{\le 2} \partial w\|_{(X^{-(s_1-1)})'} +
\|\psi_R \tilde{c}
Z^{\le 2} w\|_{(X^{-(s_1-1)})'}.
\end{multline*}
Moreover,  if $s_2\in(1/2,1)$, we have by the trace estimate \eqref{eq-trace},
\begin{multline*}\|r^{n/2-s_2} \psi_R Z^{\le 2} w\|_{L^\infty_{t,r}
    H^{s_2-1/2}_\omega} + \|\partial (\psi_R Z^{\le 2} w)\|_{X^{s_2-1}}
\les
\|\psi_R Z^{\le 2} w(0)\|_{\dot H^{s_2}}
\\+\|\psi_R Z^{\le 2} \pt w(0)\|_{\dot H^{s_2-1}}
+\|\psi_R Z^{\le 2} F\|_{L^1_t\dot H^{s_2-1}}+
\|r^{3/2-s_2}[P,\psi_R]
Z^{\le 2} w\|_{L^2_{t,x}}
\\+ \|\psi_R \tilde{b} Z^{\le 2} \partial w\|_{(X^{-(s_2-1)})'} +
\|\psi_R \tilde{c}
Z^{\le 2} w\|_{(X^{-(s_2-1)})'}.
\end{multline*}
Then, by interpolation (using Lemma~\ref{lem-Xinterp}), for $p\in (2,\infty)$ and
 $s\in(1/2-1/p,1)$
\begin{multline*}
\|r^{n/2-(n+1)/p-s} \psi_R Z^{\le 2}w\|_{L^p_{t,r}
    H^{\sigma}_\omega}
+ \|\partial \psi_R Z^{\le 2} w\|_{X^{s-1}}
\\\les
\|\psi_R Z^{\le 2} w(0)\|_{\dot H^{s}}
+\|\psi_R Z^{\le 2} \pt w(0)\|_{\dot H^{s-1}}
+\|\psi_R Z^{\le 2}F\|_{L^1_t\dot H^{s-1}}\\
+\|r^{3/2-s}[P,\psi_R] Z^{\le 2} w\|_{L^2_{t,x}}
+ \| \psi_R \tilde{b} Z^{\le 2}  \partial  w\|_{(X^{-(s-1)})'} 
\\+\|\psi_R \tilde{c}
Z^{\le 2} w\|_{(X^{-(s-1)})'}
\end{multline*}
for any $\sigma<\min(s-\frac 1 2+\frac 1p, \frac12-\frac1p)$. 
Here, to obtain the estimate when $s\in (1/2-1/p, 1-2/p]$, we
interpolate with $\theta=1-2/p$, $s_1=\de < \frac{2}{p}\Bigl[s -
\Bigl(\frac{1}{2}-\frac{1}{p}\Bigr)\Bigr]$, 
$s_2=(s-2\de/p)/\theta\in (1/2, 1)$.  Then 
$s=(1-\theta)s_1+\theta s_2$ and $\sigma=\theta(s_2-1/2)=s-1/2+1/p-2\de/p$.
For $s\in [1-2/p, 1)$, with $0<\de <
\Bigl(1-\frac{2}{p}\Bigr)^{-1}(1-s)$, we set $\theta=1-2/p$,
$s_2=1-\de$, $s_1=p(s-\theta+\de \theta)/2\in (0,1)$.  
Then $s=(1-\theta)s_1+\theta s_2$ and $\sigma=\theta(s_2-1/2)=1/2-1/p-\de\theta$.

We have
$$\| \psi_R \tilde{b} Z^{\le 2}  \partial  w\|_{(X^{-(s-1)})'} 
\les
 \|r^{3/2-s} \tilde{b} \psi_R' Z^{\le 2} w\|_{L^2_{t,x}} + 
\|\tilde{b} \partial \psi_R Z^{\le 2} w\|_{(X^{-(s-1)})'} \ ,
$$
and so by Lemma~\ref{thm-MetTa2},
\begin{multline*}
\|r^{n/2-(n+1)/p-s} \psi_R Z^{\le 2}w\|_{L^p_{t,r}
    H^{\sigma}_\omega}
+ \|\partial \psi_R Z^{\le 2} w\|_{X^{s-1}}
\\\les
\|\psi_R Z^{\le 2} w(0)\|_{\dot H^{s}}
+\|\psi_R Z^{\le 2} \pt w(0)\|_{\dot H^{s-1}}
+\|\psi_R Z^{\le 2}F\|_{L^1_t\dot H^{s-1}}
\\
+\|r^{3/2-s}[P,\psi_R] Z^{\le 2} w\|_{L^2_{t,x}}
+ \|r^{3/2-s} \tilde{b} \psi_R' Z^{\le 2} w\|_{L^2_{t,x}} 
\\+ 
\|\tilde{b} \partial \psi_R Z^{\le 2} w\|_{(X^{-(s-1)})'} 
+\|\psi_R \tilde{c}
Z^{\le 2} w\|_{(X^{-(s-1)})'}\ .
\end{multline*}
We may now apply \eqref{MetTa2-b} and \eqref{MetTa2-c} to bootstrap
the last two terms, provided that $\delta_1$ is sufficiently small.
The remainder of the proof is independent of the choice of $R$, and as
such, our implicit constants moving forward may now depend on $R$.

Since $[P,\psi_R]$ and $\psi_R'$ are supported on the fixed annulus $\{|x|\approx
R\}$ and since the coefficients of $Z$ are $\O(1)$ on this annulus, it
follows that
\begin{multline*}\|r^{3/2-s} [P,\psi_R]Z^{\le 2} w\|_{L^2_{t,x}} + \|r^{3/2-s}
\tilde{b}\psi_R' Z^{\le 2} w\|_{L^2_{t,x}}\\\lesssim
\|(1-\chi)\partial \partial^{\le 2} w\|_{\ell^{-1/2}_\infty L^2_{t,x}} +
\|\partial^{\le 2} w\|_{\ell^{-3/2}_\infty L^2_{t,x}}.
\end{multline*}
To these last terms, we may apply \eqref{H2}.
Moreover, as
$\psi_R-\psi^p_R$ is supported on $\{|x|\approx R\}$ and as
$\|f\|_{\dot{H}^{-\delta}}\lesssim \|f\|_{L^2}$ for $f$ supported on a
  fixed ball and  $0\le \delta < \frac{n}{2}$, we have
\[ \|(\psi_R-\psi^p_R) Z^{\le 2} F\|_{L^1_t\dot{H}^{s-1}} \lesssim
\|\partial^{\le 2} F\|_{L^1_tL^2_x}.\]
So,  with $s\in(1/2-1/p,1/2)$, which ensures that $1-s>1/2$, we get by the dual to the trace estimates \eqref{eq-trace},
\begin{multline*}\|r^{n/2-(n+1)/p-s}\psi_R Z^{\le 2} w\|_{L^p_{t,r} H^{\sigma}_\omega}\les \|\psi_R Z^{\le 2} w(0)\|_{\dot H^{s}}
+\|\psi_R Z^{\le 2}\pt w(0)\|_{\dot H^{s-1}}\\ 
+\|w(0)\|_{H^3} + \|\partial_t w(0)\|_{H^2}+\|r^{-(n-2)/2-s}
\psi^p_R Z^{\le 2}F\|_{L^1_{t,r} H^{s-1/2}_\omega}
+\|\partial^{\le 2} F\|_{L^1_t L^2_x}.
\end{multline*}

Finally, we recast the first two terms on the right in terms of our
initial data.  For the first term, the only issue is when both vector
fields are $\partial_t$.  Since $R$ is chosen sufficiently large so
that $P$ is a small perturbation of $\Box$ on the support of $\psi_R$,
we may use the equation to establish
\[\|\psi_R \partial_t^2 w(0)\|_{\dot{H}^s} \lesssim \|\psi_R Y^{\le 2}
w(0)\|_{\dot{H}^s} + \|\psi_R Y^{\le 2} \partial_t
w(0)\|_{\dot{H}^{s-1}} + \|\psi_R Y^{\le 1} F(0)\|_{\dot{H}^{s-1}}.\]
Similarly using the equation to convert occurrences of $\partial_t^3$
and $\partial_t^2$ yields
\begin{multline*}
\|\psi_R Z^{\le 2} w(0)\|_{\dot{H}^s} + \|\psi_R Z^{\le
  2} \partial_t w(0)\|_{\dot{H}^{s-1}}
\lesssim \|\psi_R Y^{\le 2}
w(0)\|_{\dot{H}^s} \\+ \|\psi_R Y^{\le 2} \partial_t
w(0)\|_{\dot{H}^{s-1}} + \|\psi_R Z^{\le 1} F(0)\|_{\dot{H}^{s-1}},
\end{multline*}
and completes the proof.
\end{proof}

\section{Small data global existence}

Here we state a more precise version of Theorem \ref{metaTheorem0} and
provide a proof.  For a given $q$, we shall apply
Theorem~\ref{thm-wStri} with $s=\frac{n}{2}-\frac{2}{q-1}$.  We note
that $s\in (1/2-1/q,1/2)$ precisely
$p_c<q<p_{\mathrm{conf}}:=1+\frac{4}{n-1}$, where the latter is the conformally
invariant exponent above which small data global existence may be
established in the flat case using Strichartz estimates.  See, e.g.,
\cite{Sogge}.  We
set $-\alpha = \frac{n}{2}-\frac{n+1}{q}-s = \frac{2}{q-1} -
\frac{n+1}{q}$ to be the power of the weight in the left side of
\eqref{weightedStrichartz} and note that $-\frac{n-2}{2}-s = -\alpha q$.
We define the norms
\begin{equation}
   \|u\|_{X_k} =  
  \| r^{-\alpha} \psi_R Z^{\le k} u \|_{L^q L^q L^\theta} +
  \| \partial^{\le k} u\|_{\ell^{-3/2}_\infty L^2 L^2 L^2}
+\|\pa^{\le k}\pa u\|_{L^\infty L^2 L^2},\label{X-norm}
\end{equation}
\begin{equation}\label{N-norm}
 \|g\|_{N_k} = \| r^{- \alpha q} \psi_R^q Z^{\le k} g
   \|_{L^1 L^1 L^{2}} +
 \|Z^{\le k} g\|_{L^1 L^2 L^2}
\end{equation}
where $R>R_1$ is sufficiently large as dictated by
Theorem~\ref{thm-wStri}.  Here we may choose any $\theta$ satisfying
\[2< \theta < \min\Bigl(q,\frac{2(n-1)}{n-1-2\min(s-1/2+1/q,1/2-1/q)}\Bigr),\]
where we additionally note that $\min(s-1/2+1/q,1/2-1/q)>0$ provided
$q>\max(2,p_c)$.  In the case of $n=3$, it would suffice to work with
$\theta = 2$.

\begin{thm}\label{main_thm}
Let $n=3,4$, and assume that \eqref{H1}, \eqref{H1.1}, \eqref{H1.2} and \eqref{H2}
hold.  Consider \eqref{main.eq} with $p>p_c$.  Set
$s=\frac{n}{2}-\frac{2}{q-1}\in (1/2-1/q,1/2)$ where $q=p$ if $p\in
(p_c,p_{\mathrm{conf}})$ and $q\in (p_c, p_{\mathrm{conf}})$ is any fixed choice when
$p\ge p_{\mathrm{conf}}$.  Then there exists
$\varepsilon_0>0$ sufficiently small and a $R>R_0$ sufficiently large, 
so that if $0<\varepsilon<\varepsilon_0$ and
\begin{equation}
  \label{data}
  \|Y^{\le 2} \nabla_x^{\le 1} u_0\|_{L^2} + \|\nabla_x^{\le 2}
  u_1\|_{L^2} + \|Y^{\le 2} u_1\|_{\dot{H}^{s-1}}\le \varepsilon,
\end{equation}
then there exists a global solution $u$ with $\|u\|_{X_2}\lesssim \varepsilon$.
\end{thm}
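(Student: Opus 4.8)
\textbf{Proof plan for Theorem~\ref{main_thm}.}

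The plan is to set up a standard Picard iteration in the norm $\|\cdot\|_{X_2}$ defined in \eqref{X-norm}, using Theorem~\ref{thm-wStri} as the linear solvability estimate and Lemma~\ref{thm-Sobo} together with the trace estimates of Lemma~\ref{thm-trace} to close the nonlinear estimates. Set $u_{-1}=0$ and let $u_k$ solve $Pu_k = F_p(u_{k-1})$ with the prescribed Cauchy data. The first task is to verify that the right-hand side of \eqref{weightedStrichartz}, when the forcing is $F_p(u_{k-1})$, is controlled by $\|u_{k-1}\|_{N_2}$ plus the data norm \eqref{data}; the terms $\|\psi_R Z^{\le 2}w(0,\cd)\|_{\dot H^s}$ etc.\ are handled by \eqref{data} after using the equation at $t=0$ to convert the single $\dot H^{s-1}$-norm of $Z^{\le 1}Pw(0,\cd)$ into data norms (here $F_p(u_0)$ at $t=0$ is estimated crudely since $s<1/2$), and the $L^\infty L^2$ energy and $\ell^{-3/2}_\infty L^2_{t,x}$ pieces of $\|u_k\|_{X_2}$ come directly from \eqref{H2} applied with forcing $F_p(u_{k-1})$. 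Thus the linear step reduces everything to the single scheme-level bound $\|u_k\|_{X_2}\lesssim \varepsilon + \|F_p(u_{k-1})\|_{N_2}$.

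The heart of the argument is the nonlinear estimate $\|F_p(u)\|_{N_2}\lesssim \|u\|_{X_2}^p$, together with the difference version $\|F_p(u)-F_p(v)\|_{N_2}\lesssim (\|u\|_{X_2}^{p-1}+\|v\|_{X_2}^{p-1})\|u-v\|_{X_2}$; the latter follows from the former by the same computation using \eqref{Fp} (which bounds up to two $u$-derivatives of $F_p$), so I would concentrate on the former. Splitting $N_2$ into its two pieces: for the weighted piece $\|r^{-\alpha q}\psi_R^q Z^{\le 2}F_p(u)\|_{L^1L^1L^2}$, the Leibniz rule and \eqref{Fp} reduce matters to estimating $\|r^{-\alpha q}\psi_R^q |u|^{p-j}\prod|Z u|\|$ with a total of at most two vector fields distributed; one factor of $r^{-\alpha}\psi_R Z^{\le 2}u$ is kept in $L^qL^qL^\theta$ (i.e.\ in $\|u\|_{X_2}$), the weight $r^{-\alpha q}=r^{-\alpha}\cdot r^{-\alpha(q-1)}$ being split so that the remaining $r^{-\alpha(q-1)}$ distributes as $r^{-\alpha}$ over the other $p-1\ge q-1$ low-order factors (using $q\le p$), and those factors are placed in $L^{q'}$-type spaces via the weighted Sobolev embedding \eqref{eq-Sobo} or \eqref{eq-Sobo2} of Lemma~\ref{thm-Sobo} — this is exactly where the restriction $n\le 4$ and the derivative count $[(n+1)/2]\le 2$ enters, and where the exponent $\theta$ and the range $s\in(1/2-1/q,1/2)$ are used to make the Hölder bookkeeping in $t$, $r$, $\omega$ balance (the choice $s=\frac n2-\frac{2}{q-1}$, $-\alpha=\frac{2}{q-1}-\frac{n+1}{q}$, $-\alpha q=-\frac{n-2}{2}-s$ is precisely engineered for this). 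For the second piece $\|Z^{\le 2}F_p(u)\|_{L^1L^2L^2}$, supported effectively near $B_R$ (or, for the unweighted bound, wherever $r\lesssim R$ since the far region is covered by the weighted term with its favorable weight), one uses fixed-radius Sobolev embedding on $B_R$ to control $|u|^{p-1}$ in $L^\infty$ by $\|u\|_{X_2}^{p-1}$ (this is where one needs $n\le 4$ so that $H^2\hookrightarrow L^\infty$ fails only logarithmically and is repaired by the extra $\ell^{-3/2}_\infty$ and energy control in $X_2$, or by spending an $\ep>0$) and keeps one factor in the energy-type norm.

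With these two estimates in hand, the contraction is routine: choosing $\varepsilon_0$ small, a standard fixed-point argument in the complete metric space $\{u:\|u\|_{X_2}\le C\varepsilon\}$ (with distance induced by $\|\cdot\|_{X_1}$, or $\|\cdot\|_{X_2}$ directly if one prefers a weak-convergence argument to recover the top regularity) produces a global solution with $\|u\|_{X_2}\lesssim\varepsilon$, and uniqueness in this class follows from the difference estimate. The main obstacle I anticipate is precisely the $n=4$ bookkeeping in the nonlinear estimate: keeping the number of vector fields hitting any single factor at most $[(n+1)/2]=2$ while simultaneously respecting that $Z^{\le 2}$ may place two derivatives on $F_p(u)$ forces a careful case analysis of how the (at most two) vector fields and the weight split among the $\ge p$ factors, and one must verify in each case that the Sobolev exponents produced by \eqref{eq-Sobo}/\eqref{eq-Sobo2} are admissible for the available $\theta$-range — this is the delicate combinatorial core, whereas the $n=3$ case (where one may take $\theta=2$) is comparatively transparent.
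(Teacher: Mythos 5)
Your overall strategy is the paper's: iterate using Theorem~\ref{thm-wStri} together with \eqref{H2}, reduce everything to a nonlinear estimate of the form $\|F_p(u)\|_{N_2}\lesssim\|u\|_{X_2}^p$, and prove that estimate by H\"older plus the Sobolev/trace lemmas, with the exponent algebra driven by the choice $s=\frac n2-\frac2{q-1}$, $-\alpha q=-\frac{n-2}2-s$. However, there are two concrete problems. First, your contraction step is wrong as stated: you claim the difference bound $\|F_p(u)-F_p(v)\|_{N_2}\lesssim(\|u\|_{X_2}^{p-1}+\|v\|_{X_2}^{p-1})\|u-v\|_{X_2}$ ``follows by the same computation using \eqref{Fp}.'' It does not. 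Applying $Z^{\le2}$ to $F_p(u)-F_p(v)$ produces terms such as $\bigl(F_p''(u)-F_p''(v)\bigr)\,Zu\,Zu$, and \eqref{Fp} gives no control of a third derivative of $F_p$; for the relevant range $p<3$ (all of $(p_c,p_{\mathrm{conf}})$ when $n=4$, and part of it when $n=3$) the map $u\mapsto F_p(u)$ is not Lipschitz at two-derivative regularity, so no ``same computation'' closes. The standard and in fact necessary device -- the one the paper uses -- is to prove boundedness in $X_2$ via \eqref{trilinear1}--\eqref{trilinear2} but contract only in the low norm, i.e.\ establish \eqref{Cauchy} in $N_0/X_0$ using merely $|F_p(u)-F_p(v)|\lesssim|(u,v)|^{p-1}|u-v|$, and then pass to the limit of the iterates in $X_0$ while retaining the uniform $X_2$ bound. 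Your parenthetical about contracting in an $X_1$-induced distance or using weak convergence gestures at this, but your primary route would fail and needs to be replaced, not merely hedged.

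Second, your treatment of the compact region $\{r\lesssim R\}$ misses the actual difficulty, which is integrability in time, not the spatial embedding. The $N_0$ norm there is $L^1_tL^2_x$, and the only $t$-integrable control of an undifferentiated factor that $X_2$ provides on a compact set is $L^2_{t}$ (from the $\ell^{-3/2}_\infty L^2_{t,x}$ localized-energy piece). So after placing one factor $f$ in $L^2_tL^2_x(\{r\le 2R+1\})$, the remaining $p-1$ factors must collectively supply $L^2_t L^\infty_x$, i.e.\ each must be bounded in $L^{2(p-1)}_tL^\infty_x$; a fixed-radius Sobolev embedding of $H^2$ into (almost) $L^\infty$ gives only $L^\infty_x$ at fixed time and no decay in $t$, hence no $L^{2(p-1)}_t$ bound. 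The paper supplies exactly this via the interpolation inequality \eqref{eq-add2}, which trades between the $L^2_tH^2_x$ localized-energy control and the $L^\infty_t(\dot H^1\cap\dot H^3)$ energy control to get \eqref{eng-terms}, and it is here (not in a logarithmic failure of $H^2\subset L^\infty$) that the constraint on $p$ enters through the Sobolev exponent $\frac{2n(p-1)}{2+(n-2)(p-1)}>\frac n2$. Your far-region and weighted $L^1L^1L^2$ computations are in the right spirit (H\"older with the weight split, sphere Sobolev $H^2_\omega\subset L^\infty_\omega$, $H^1_\omega\subset L^4_\omega$, and the weighted radial Sobolev estimates \eqref{eq-Sobo}--\eqref{eq-Sobo2} with the exponent inequalities valid for $p\le p_{\mathrm{conf}}$), but as written the compact-zone estimate, which is where the localized-energy hypothesis is really spent, is not established.
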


The proof follows that of \cite{LMSTW}
quite closely, and we only highlight the main points.  First of all,
it suffices to assume $p\in (p_c,p_{\mathrm{conf}})$.  If not, one need only
fix any $q\in (p_c,p_{\mathrm{conf}})$ and apply the proof below while noting that Sobolev embeddings provide
$\|u\|_{L^\infty_{t,x}} \lesssim \|u\|_{X_2}$, which suffices to handle
the $p-q$ extra copies of the solution in the nonlinearity.

An iteration is set up in $X_0$.  The estimate
\eqref{weightedStrichartz} can be combined with \eqref{data} and
Sobolev embeddings to show that
\[\|u\|_{X_k} \lesssim \varepsilon + \|F_p(u)\|_{N_k}.\]
The key points to bound the iteration (in $X_2$) and to show that it
converges (in $X_0$)
are to show
\begin{equation}
  \label{boundedness}
  \|F_p(u)\|_{N_2} \lesssim \|u\|^p_{X_2}
\end{equation}
and
\begin{equation}
  \label{Cauchy}
  \|F_p(u)-F_p(v)\|_{N_0} \lesssim \|(u,v)\|_{X_2}^{p-1} \|u-v\|_{X_0}.
\end{equation}

To this end, for functions $f, g$,  we shall show
\begin{equation}
  \label{trilinear1}
  \|g^{p-2} f^2\|_{N_0} \lesssim \|g\|_{X_2}^{p-2} \|f\|^2_{X_1},
\end{equation}
\begin{equation}
  \label{trilinear2}
  \|g^{p-1} f\|_{N_0} \lesssim \|g\|_{X_2}^{p-1} \|f\|_{X_0}.
\end{equation}
To see how these yield \eqref{boundedness} and \eqref{Cauchy}, we
observe that \eqref{Fp} guarantees
\[|Z^{\le 2} F_p(u)| \lesssim |u|^{p-1} |Z^{\le 2} u| + |u|^{p-2}
|Z^{\le 1} u|^2.\]
Thus an application of \eqref{trilinear2} and \eqref{trilinear1} to the terms
in the right side respectively yields \eqref{boundedness}.  Similarly,
\eqref{Fp} shows that
\[|F_p(u) -F_p(v)| \lesssim |(u,v)|^{p-1} |u-v|,\]
which shows that \eqref{Cauchy} is a direct consequence of \eqref{trilinear2}.

Assuming \eqref{trilinear1} and \eqref{trilinear2},
the remainder of the proof of Theorem~\ref{main_thm}
is exactly as in \cite{LMSTW}.
In fact, by setting $u^{(0)}\equiv 0$ and recursively define $u^{(m+1)}$
to be the solution to the linear equation
$$
\begin{cases}
P u^{(m+1)} = F_p(u^{(m)}),\quad (t,x)\in
M\\
u^{(m+1)}(0,x)=u_0(x),\quad \partial_t u^{(m+1)}(0,x)=u_1(x),
\end{cases}
$$
we have that
$$\|u^{(m+1)}\|_{X_2}\lesssim \ep+\|F_p(u^{(m)})\|_{N_2}
\lesssim\ep+ \|u^{(m)}\|_{X_2}^p\ ,
$$ which yields the uniform boundedness for the iteration sequence,
$\|u^{(m+1)}\|_{X_2}\lesssim\ep$ provided that $\varepsilon$ is
sufficiently small. Then
 we get
\begin{align*}\|u^{(m+2)}-u^{(m+1)}\|_{X_0}&\lesssim
\|F_p(u^{(m+1)})-F_p(u^{(m)})\|_{N_0}\\&\lesssim
\|(u^{(m+1)},u^{(m)})\|_{X_2}^{p-1}\|u^{(m+1)}-u^{(m)}\|_{X_0}
\end{align*}
and so the sequence converges in $X_0$, provided that
$\ep\ll 1$.  The limit $u\in X_2$ with $\|u\|_{X_2}\lesssim \ep$ is
the solution we are looking for. 

We shall end by outlining the proof of \eqref{trilinear1} and
\eqref{trilinear2}.
We first observe that 
\[\|r^{-\alpha p} \psi_R^p\, g^{p-1}  f\|_{L^1L^1L^2} 
\lesssim \|r^{-\alpha} \psi_R g\|^{p-1}_{L^pL^pL^\infty} \|r^{-\alpha}\psi_R f\|_{L^pL^pL^2}
\]
and
\[\|r^{-\alpha p} \psi_R^p \, g^{p-2}  f^2\|_{L^1L^1L^2} 
\lesssim \|r^{-\alpha} \psi_R g\|^{p-2}_{L^pL^pL^\infty} \|r^{-\alpha}\psi_R f\|^2_{L^pL^pL^4}.
\]
The Sobolev embeddings $H^2_\omega\subset L^\infty_\omega$ and
$H^1_\omega\subset L^4_\omega$ on $\mathbb{S}^{n-1}$, $n\le 4$ then
show that these are dominated by $\|g\|^{p-1}_{X_2} \|f\|_{X_0}$ and
$\|g\|^{p-2}_{X_2} \|f\|_{X_1}^2$ respectively.

It remains to consider the $L^1L^2L^2$ portions of the $N_0$ norm.
When $r\ge 2R+1$, we use a related argument that relies on the
weighted Sobolev inequalities.  To begin, we note that
\[\|g^{p-1}  f\|_{L^1L^2_{\ge 2R+1}L^2} \lesssim
\|r^{\frac{\alpha}{p-1}}  g\|_{L^p L^{\frac{2p(p-1)}{p-2}}_{\ge 2R+1}
      L^\infty}^{p-1}
\|r^{-\alpha} f\|_{L^p L^p_{\ge 2R+1} L^2}.\]
Noting also that
\[\frac{\alpha}{p-1} -\frac{n-1}{p} + \frac{(n-1)(p-2)}{2p(p-1)} \le
-\alpha\ \text{ for }\ p\le p_{\mathrm{conf}}.\]
and applying \eqref{eq-Sobo} establishes that
is controlled by $\|g\|^{p-1}_{X_2} \|f\|_{X_0}$.
We note that this is the crucial place where we use the
$L^\theta_\omega$ norm with $\theta>2$ in order to stay within the
allowed regularity when $n=4$.\footnote{When $n=3$, it would suffice
  to work with $\theta=2$.  An alternate approach to the $n=4$ case
  would be to note that the $\delta=-1$ analogs of
  Theorem~\ref{thm-MetTa}, Lemma~\ref{thm-MetTa2}, and
  Lemma~\ref{thm-MetTa2-2} are proved for $n=4$ in \cite{MetTa07}.
  This suffices to prove an estimate akin to \cite[Theorem 2.3]{DMSZ}
  from which the corresponding $n=4$ existence theorem was shown to
  follow.}
Similarly, noting that
\[\frac{2}{p-2}\Bigl(\alpha - \frac{n-1}{p}+\frac{n-1}{4}\Bigr)\le
-\alpha + \frac{n-1}{p}\ \text{ for } \ p\le p_{\mathrm{conf}},\]
we may apply \eqref{eq-Sobo}, \eqref{eq-Sobo2}, and H\"older's
inequality to bound
\begin{multline*}
\|g^{p-2} \, f^2\|_{L^1 L^2_{\ge 2R+1} L^2} \\\lesssim
\|r^{\frac{2}{p-2}\left(\alpha-\frac{n-1}{p}+\frac{n-1}{4}\right)}g\|_{L^p L^\infty_{\ge 2R+1} L^\infty}^{p-2}
\|r^{-\alpha +  \frac{n-1}{p}-\frac{n-1}{4}} f\|^2_{L^p L^4_{\ge 2R+1} L^4} 
\end{multline*}
by $\|g\|^{p-2}_{X_2} \|f\|^2_{X_1}$.

Finally we need to control the same over $B_{2R+1}$ where the vector
fields $Z$ all have bounded coefficients. 
 In $n=3,4$, we claim that we have 
\beeq\label{eq-add2}\|g\|_{L^\infty_x} \lesssim \|g\|^{\frac{1}{p-1}}_{H^2}
\|g\|^{1-\frac{1}{p-1}}_{\dot{H}^1\cap\dot{H}^3}\ ,\eneq
for any $p>p_c$,
and so
\begin{equation}\label{eng-terms}\|g\|_{L^{2(p-1)}L^\infty_{\le 2R+1} L^\infty} \lesssim
\|g\|^{\frac{1}{p-1}}_{L^2_t H^2_{\le 2R+2}}
\|g\|^{1-\frac{1}{p-1}}_{L^\infty_t(\dot{H}^3\cap \dot{H}^1)}\lesssim
\|g\|_{X_2}.
\end{equation}
As
\[\|g^{p-1} \, f\|_{L^1 L^2_{\le 2R+1} L^2} \lesssim
\|g\|^{p-1}_{L^{2(p-1)}L^\infty_{\le 2R+1} L^\infty} \|
f\|_{L^2 L^2_{\le 2R+1}L^2},\]
it follows from \eqref{eng-terms} that the right side is controlled by
$\|g\|_{X_2}^{p-1} \|f\|_{X_0}$, which completes the proof of \eqref{trilinear2}.  

Similarly, as
\[\|g^{p-2} \, f^2\|_{L^1 L^2_{\le 2R+1} L^2} \lesssim
\|g\|^{p-2}_{L^\infty L^\infty L^\infty} \|f\|^2_{L^2
  L^4_{\le 2R+1} L^4},\]
the Sobolev embeddings $\dot{H}^3\cap \dot{H}^1\subset L^\infty$,
$H^1\subset L^4$ show that this is also controlled by $\|g\|_{X_2}^{p-2}
\|f\|^2_{X_1}$, which finishes the proof of \eqref{trilinear1}. 

For the proof of \eqref{eq-add2}, we recall that if $0<a<b<c\le
\infty$ and $\frac{1}{b} = \frac{\lambda}{a} + \frac{1-\lambda}{c}$,
then
\[\|g\|_{L^b} \le \|g\|_{L^a}^\lambda \|g\|_{L^c}^{1-\lambda}.\]
We shall apply this with $a=2$, $c=2n/(n-2)$, and $\lambda =
\frac{1}{p-1}$.  This yields $b = 2n(p-1)/(2+(n-2)(p-1))$ and $b>2$ if
$p>2$.  Hence, by standard Sobolev embeddings, we obtain the
following, which implies \eqref{eq-add2},
\begin{align*}\|g\|_{L^\infty}&\lesssim \|\nabla_x^{\le 2}
g\|_{L^{\frac{2n(p-1)}{2+(n-2)(p-1)}}} \lesssim \|\nabla_x^{\le 2}
g\|^{\frac{1}{p-1}}_{L^2} \|\nabla_x^{\le 2}
g\|^{1-\frac{1}{p-1}}_{L^{\frac{2n}{n-2}}}\\ &\lesssim \|\nabla_x^{\le 2}
g\|^{\frac{1}{p-1}}_{L^2} \|\nabla_x^{\le 2}
g\|^{1-\frac{1}{p-1}}_{\dot{H}^1}.
\end{align*}




\begin{thebibliography}{10}

\bibitem{AB09}
Lars Andersson and Pieter Blue.
\newblock Hidden symmetries and decay for the wave equation on the {K}err
  spacetime.
\newblock {\em Ann. of Math. (2)}, 182(3):787--853, 2015.

\bibitem{BL}
J{\"o}ran Bergh and J{\"o}rgen L{\"o}fstr{\"o}m.
\newblock {\em Interpolation spaces. {A}n introduction}.
\newblock Springer-Verlag, Berlin-New York, 1976.
\newblock Grundlehren der Mathematischen Wissenschaften, No. 223.

\bibitem{BSerr}
P.~Blue and A.~Soffer.
\newblock Errata for "{G}lobal existence and scattering for the nonlinear
  {S}chr\"odinger equation on {S}chwarzschild manifolds'', ``{S}emilinear wave
  equations on the {S}chwarzschild manifold {I}: {L}ocal {D}ecay {E}stimates'',
  and ``{T}he wave equation on the {S}chwarzschild metric {II}: {L}ocal decay
  for the spin 2 {R}egge {W}heeler equation.
\newblock Preprint. ArXiv:gr-qc/0608073.

\bibitem{BS}
P.~Blue and A.~Soffer.
\newblock Semilinear wave equations on the {S}chwarzschild manifold. {I}.
  {L}ocal decay estimates.
\newblock {\em Adv. Differential Equations}, 8(5):595--614, 2003.

\bibitem{BoHa}
Jean-Fran{\c{c}}ois Bony and Dietrich H{\"a}fner.
\newblock The semilinear wave equation on asymptotically {E}uclidean manifolds.
\newblock {\em Comm. Partial Differential Equations}, 35(1):23--67, 2010.

\bibitem{Booth}
R.~Booth.
\newblock Energy estimates on asymptotically flat surfaces of revolution.
\newblock Master's project, University of North Carolina, 2011.

\bibitem{BCMP}
Robert Booth, Hans Christianson, Jason Metcalfe, and Jacob Perry.
\newblock Localized energy for wave equations with degenerate trapping.
\newblock In preparation.

\bibitem{Burq}
N.~Burq.
\newblock Global {S}trichartz estimates for nontrapping geometries: about an
  article by {H}. {F}.\ {S}mith and {C}. {D}.\ {S}ogge: ``{G}lobal {S}trichartz
  estimates for nontrapping perturbations of the {L}aplacian'' [{C}omm.
  {P}artial {D}ifferential {E}quation {\bf 25} (2000), no. 11-12 2171--2183;
  {MR}1789924 (2001j:35180)].
\newblock {\em Comm. Partial Differential Equations}, 28(9-10):1675--1683,
  2003.

\bibitem{ChrWu}
Hans Christianson and Jared Wunsch.
\newblock Local smoothing for the {S}chr\"odinger equation with a prescribed
  loss.
\newblock {\em Amer. J. Math.}, 135(6):1601--1632, 2013.

\bibitem{DaRoNew}
M.~Dafermos and I.~Rodnianski.
\newblock Decay for solutions of the wave equation on {K}err exterior
  spacetimes {I}-{II}: {T}he cases $|a|\ll {M}$ or axisymmetry.
\newblock Preprint. ArXiv:1010.5132.

\bibitem{DaRo}
M.~Dafermos and I.~Rodnianski.
\newblock A note on energy currents and decay for the wave equation on a
  {S}chwarzschild background.
\newblock Preprint. ArXiv:0710.0171.

\bibitem{DaRo09}
Mihalis Dafermos and Igor Rodnianski.
\newblock The red-shift effect and radiation decay on black hole spacetimes.
\newblock {\em Comm. Pure Appl. Math.}, 62(7):859--919, 2009.

\bibitem{DaRo08}
Mihalis Dafermos and Igor Rodnianski.
\newblock A proof of the uniform boundedness of solutions to the wave equation
  on slowly rotating {K}err backgrounds.
\newblock {\em Invent. Math.}, 185(3):467--559, 2011.

\bibitem{DaRoNotes}
Mihalis Dafermos and Igor Rodnianski.
\newblock Lectures on black holes and linear waves.
\newblock In {\em Evolution equations}, volume~17 of {\em Clay Math. Proc.},
  pages 97--205. Amer. Math. Soc., Providence, RI, 2013.

\bibitem{DaRoSR}
Mihalis Dafermos, Igor Rodnianski, and Yakov Shlapentokh-Rothman.
\newblock Decay for solutions of the wave equation on {K}err exterior
  spacetimes {III}: {T}he full subextremal case {$|a|<M$}.
\newblock {\em Ann. of Math. (2)}, 183(3):787--913, 2016.

\bibitem{DMSZ}
Yi~Du, Jason Metcalfe, Christopher~D. Sogge, and Yi~Zhou.
\newblock Concerning the {S}trauss conjecture and almost global existence for
  nonlinear {D}irichlet-wave equations in 4-dimensions.
\newblock {\em Comm. Partial Differential Equations}, 33(7-9):1487--1506, 2008.

\bibitem{FaWa}
Daoyuan Fang and Chengbo Wang.
\newblock Weighted {S}trichartz estimates with angular regularity and their
  applications.
\newblock {\em Forum Math.}, 23(1):181--205, 2011.

\bibitem{GHK}
Vladimir Georgiev, Charlotte Heiming, and Hideo Kubo.
\newblock Supercritical semilinear wave equation with non-negative potential.
\newblock {\em Comm. Partial Differential Equations}, 26(11-12):2267--2303,
  2001.

\bibitem{GLS97}
Vladimir Georgiev, Hans Lindblad, and Christopher~D. Sogge.
\newblock Weighted {S}trichartz estimates and global existence for semilinear
  wave equations.
\newblock {\em Amer. J. Math.}, 119(6):1291--1319, 1997.

\bibitem{HMSSZ}
Kunio Hidano, Jason Metcalfe, Hart~F. Smith, Christopher~D. Sogge, and Yi~Zhou.
\newblock On abstract {S}trichartz estimates and the {S}trauss conjecture for
  nontrapping obstacles.
\newblock {\em Trans. Amer. Math. Soc.}, 362(5):2789--2809, 2010.

\bibitem{John79}
Fritz John.
\newblock Blow-up of solutions of nonlinear wave equations in three space
  dimensions.
\newblock {\em Manuscripta Math.}, 28(1-3):235--268, 1979.

\bibitem{Klainerman}
S.~Klainerman.
\newblock The null condition and global existence to nonlinear wave equations.
\newblock In {\em Nonlinear systems of partial differential equations in
  applied mathematics, {P}art 1 ({S}anta {F}e, {N}.{M}., 1984)}, volume~23 of
  {\em Lectures in Appl. Math.}, pages 293--326. Amer. Math. Soc., Providence,
  RI, 1986.

\bibitem{LM}
Parul Laul and Jason Metcalfe.
\newblock Localized energy estimates for wave equations on high-dimensional
  {S}chwarzschild space-times.
\newblock {\em Proc. Amer. Math. Soc.}, 140(9):3247--3262, 2012.

\bibitem{LMTT}
Parul Laul, Jason Metcalfe, Shreyas Tikare, and Mihai Tohaneanu.
\newblock Localized energy estimates for wave equations on
  {$(1+4)$}-dimensional {M}yers-{P}erry space-times.
\newblock {\em SIAM J. Math. Anal.}, 47(3):1933--1957, 2015.

\bibitem{LMSTW}
Hans Lindblad, Jason Metcalfe, Christopher~D. Sogge, Mihai Tohaneanu, and
  Chengbo Wang.
\newblock The {S}trauss conjecture on {K}err black hole backgrounds.
\newblock {\em Math. Ann.}, 359(3-4):637--661, 2014.

\bibitem{MMTT}
Jeremy Marzuola, Jason Metcalfe, Daniel Tataru, and Mihai Tohaneanu.
\newblock Strichartz estimates on {S}chwarzschild black hole backgrounds.
\newblock {\em Comm. Math. Phys.}, 293(1):37--83, 2010.

\bibitem{MetSo06}
Jason Metcalfe and Christopher~D. Sogge.
\newblock Long-time existence of quasilinear wave equations exterior to
  star-shaped obstacles via energy methods.
\newblock {\em SIAM J. Math. Anal.}, 38(1):188--209, 2006.

\bibitem{MetSo07}
Jason Metcalfe and Christopher~D. Sogge.
\newblock Global existence of null-form wave equations in exterior domains.
\newblock {\em Math. Z.}, 256(3):521--549, 2007.

\bibitem{MST}
Jason Metcalfe, Jacob Sterbenz, and Daniel Tataru.
\newblock Local energy decay for scalar fields on time dependent non-trapping
  backgrounds.
\newblock Preprint. ArXiv:1703.08064.

\bibitem{MetTa09}
Jason Metcalfe and Daniel Tataru.
\newblock Decay estimates for variable coefficient wave equations in exterior
  domains.
\newblock In {\em Advances in phase space analysis of partial differential
  equations}, volume~78 of {\em Progr. Nonlinear Differential Equations Appl.},
  pages 201--216. Birkh\"auser Boston, Inc., Boston, MA, 2009.

\bibitem{MetTa07}
Jason Metcalfe and Daniel Tataru.
\newblock Global parametrices and dispersive estimates for variable coefficient
  wave equations.
\newblock {\em Math. Ann.}, 353(4):1183--1237, 2012.

\bibitem{MTT}
Jason Metcalfe, Daniel Tataru, and Mihai Tohaneanu.
\newblock Price's law on nonstationary space-times.
\newblock {\em Adv. Math.}, 230(3):995--1028, 2012.

\bibitem{Mo2}
Cathleen~S. Morawetz.
\newblock Time decay for the nonlinear {K}lein-{G}ordon equations.
\newblock {\em Proc. Roy. Soc. Ser. A}, 306:291--296, 1968.

\bibitem{Ral}
James~V. Ralston.
\newblock Solutions of the wave equation with localized energy.
\newblock {\em Comm. Pure Appl. Math.}, 22:807--823, 1969.

\bibitem{Roddy}
K.~Roddy.
\newblock Blackbox localized energy decay.
\newblock Master's project, University of North Carolina, 2013.

\bibitem{Sbierski}
Jan Sbierski.
\newblock Characterisation of the energy of {G}aussian beams on {L}orentzian
  manifolds: with applications to black hole spacetimes.
\newblock {\em Anal. PDE}, 8(6):1379--1420, 2015.

\bibitem{Schlue}
Volker Schlue.
\newblock Decay of linear waves on higher-dimensional {S}chwarzschild black
  holes.
\newblock {\em Anal. PDE}, 6(3):515--600, 2013.

\bibitem{Sideris}
Thomas~C. Sideris.
\newblock Nonexistence of global solutions to semilinear wave equations in high
  dimensions.
\newblock {\em J. Differential Equations}, 52(3):378--406, 1984.

\bibitem{Sogge}
Christopher~D. Sogge.
\newblock {\em Lectures on nonlinear wave equations}.
\newblock Monographs in Analysis, II. International Press, Boston, MA, 1995.

\bibitem{SoWa10}
Christopher~D. Sogge and Chengbo Wang.
\newblock Concerning the wave equation on asymptotically {E}uclidean manifolds.
\newblock {\em J. Anal. Math.}, 112:1--32, 2010.

\bibitem{ST}
Walter~A. Strauss and Kimitoshi Tsutaya.
\newblock Existence and blow up of small amplitude nonlinear waves with a
  negative potential.
\newblock {\em Discrete Contin. Dynam. Systems}, 3(2):175--188, 1997.

\bibitem{Ta01-2}
Daniel Tataru.
\newblock Strichartz estimates in the hyperbolic space and global existence for
  the semilinear wave equation.
\newblock {\em Trans. Amer. Math. Soc.}, 353(2):795--807 (electronic), 2001.

\bibitem{Ta08}
Daniel Tataru.
\newblock Parametrices and dispersive estimates for {S}chr\"odinger operators
  with variable coefficients.
\newblock {\em Amer. J. Math.}, 130(3):571--634, 2008.

\bibitem{Ta13}
Daniel Tataru.
\newblock Local decay of waves on asymptotically flat stationary space-times.
\newblock {\em Amer. J. Math.}, 135(2):361--401, 2013.

\bibitem{TT}
Daniel Tataru and Mihai Tohaneanu.
\newblock A local energy estimate on {K}err black hole backgrounds.
\newblock {\em Int. Math. Res. Not. IMRN}, (2):248--292, 2011.

\bibitem{TY}
Grozdena Todorova and Borislav Yordanov.
\newblock Critical exponent for a nonlinear wave equation with damping.
\newblock {\em J. Differential Equations}, 174(2):464--489, 2001.

\bibitem{V}
Nicola Visciglia.
\newblock Semilinear wave equation with time dependent potential.
\newblock {\em Math. Methods Appl. Sci.}, 27(18):2153--2170, 2004.

\bibitem{W}
Chengbo Wang.
\newblock The {G}lassey conjecture on asymptotically flat manifolds.
\newblock {\em Trans. Amer. Math. Soc.}, 367(10):7429--7451, 2015.

\bibitem{WaYu11}
Chengbo Wang and Xin Yu.
\newblock Concerning the {S}trauss conjecture on asymptotically {E}uclidean
  manifolds.
\newblock {\em J. Math. Anal. Appl.}, 379(2):549--566, 2011.

\bibitem{WaYu11p}
Chengbo Wang and Xin Yu.
\newblock Recent works on the {S}trauss conjecture.
\newblock In {\em Recent advances in harmonic analysis and partial differential
  equations}, volume 581 of {\em Contemp. Math.}, pages 235--256. Amer. Math.
  Soc., Providence, RI, 2012.

\end{thebibliography}

\end{document}